\definecolor{darkgreen}{rgb}{0,0.55,0}
\newtheorem{theorem}{Theorem}[section]
\newtheorem{lemma}[theorem]{Lemma}
\newtheorem{proposition}[theorem]{Proposition}
\theoremstyle{definition}
\theoremstyle{remark}
\newtheorem{remark}[theorem]{Remark}
\numberwithin{equation}{section}
\numberwithin{theorem}{section}
\newcommand{\abs}[1]{\left\vert#1\right\vert}
\newcommand{\R}{{\mathbb{R}}}
\DeclareMathOperator{\dist}{dist}
\DeclareMathOperator*{\esslim}{ess\,lim}
\def\be{\begin{equation}}
\def\ee{\end{equation}}
\def\({\left(}
\def\){\right)}
\def\R{\mathbb{R}}
\def\e{\varepsilon}
\author[]{Andres A. Contreras Hip}
\address[A.~Contreras Hip]{Department of Mathematics, The University of Chicago, Chicago, Illinois, USA}
\email{acontreraship@uchicago.edu} 
\author[]{Xavier Lamy}
\address[X.~Lamy]{Institut de Math\'ematiques de Toulouse; UMR 5219, Universit\'e de Toulouse; CNRS, UPS IMT, F-31062 Toulouse Cedex 9, France}
\email{xlamy@math.univ-toulouse.fr}
\author[]{Elio Marconi*}
\address[E.~Marconi]{EPFL B, Station 8, CH-1015 Lausanne, Switzerland.}
\email{elio.marconi@epfl.ch}
\thanks{*Corresponding author}
\begin{document}
\title[Generalized Characteristics for Finite Entropy Solutions]{Generalized Characteristics for Finite Entropy Solutions of Burgers' Equation}


%

\maketitle
{
\rightskip .85 cm
\leftskip .85 cm
\parindent 0 pt
\begin{footnotesize}
{\sc Abstract.} We prove the existence of generalized characteristics for weak, not necessarily entropic, solutions of Burgers' equation
\[
\partial_t u +\partial_x \frac{u^2}{2} =0,
\]
whose entropy productions are signed measures. 
Such solutions arise in connection with large deviation principles for the hydrodynamic limit of interacting particle systems. The present work allows to remove a technical trace assumption in a recent result by the two first authors about the $L^2$ stability of entropic shocks among such non-entropic solutions. 
The proof relies on the Lagrangian representation of a solution's hypograph, recently constructed by the third author. In particular, we prove a decomposition formula for the entropy flux across a given hypersurface, which is valid for general multidimensional scalar conservation laws.

\medskip\noindent
{\sc Keywords:} generalized characteristics, finite entropy solutions, Burgers' equation, Lagrangian representation, $L^2$ stability of shocks.

\medskip\noindent
{\sc MSC (2010):  35L60.}

\end{footnotesize}
}

\section{Introduction}
We consider bounded weak (not necessarily entropy) solutions of Burgers' equation
\begin{align*}
\partial_t u +\partial_x \frac{u^2}{2} =0,
\end{align*}
or more generally a scalar conservation law
\begin{align}\label{eq:burgers}
\partial_t u +\partial_x f(u) =0,\quad t > 0,\; x\in\R,
\end{align}
with strictly convex flux $f''\geq\alpha > 0$. 
For any entropy-flux pair $(\eta,q)$ i.e. $\eta''\geq 0$ and $q'=\eta'f'$, the corresponding entropy production of a bounded weak solution $u$ is the distribution
\begin{align}\label{eq:mueta}
\mu_\eta =\partial_t\eta(u) +\partial_x q(u).
\end{align}
Entropy solutions are weak solutions whose entropy production is nonpositive, i.e. $\mu_\eta\leq 0$ for all convex entropies $\eta$, and given any bounded initial condition $u_0(x)$ there exists a unique entropy solution \cite{kruzkov70}.

Here in contrast we consider weak solutions whose entropy productions do not necessarily have a sign: we call \emph{finite-entropy solution} any bounded weak solutions of \eqref{eq:burgers} such that
\begin{align}\label{eq:finiteentropy}
\mu_\eta \text{ is a Radon measure for all convex }\eta,
\end{align}
where $\mu_\eta$ is the entropy production defined in \eqref{eq:mueta}.
This larger class of solutions is relevant in 
 the study of large deviation principles for the hydrodynamic limit of asymmetric interacting particle systems \cite{kipnis-landim,varadhan04}, or for  scalar conservation laws with appropriately small random forcing \cite{mariani10,bellettini-etal}. Known tools fail at completing the large deviation analysis because finite-entropy solutions do not in general have bounded variation ($BV$). Similar issues arise in the study of the so-called Aviles-Giga energy (see \cite[Introduction]{lamy-otto} for more details). In the past years, several works have proved $BV$-like structural properties for finite-entropy solutions \cite{lecumberry,delellis-otto-westdickenberg,marconi-structure,marconi-rectif} but the large deviation principle still seems out of reach. 
 
A key progress would be to obtain good estimates on the distance to entropy solutions in terms of the positive part of the entropy production. Inspired by \cite{krupa-vasseur}, the two first authors recently proposed a {\it relative entropy} approach to that question \cite{stab_shock}, but they had to assume the existence of \textit{generalized characteristics}: Lipschitz curves $x(t)$ such that $x'(t)=f'(u^\pm(t,x(t)))$ for a.e. $t$ at which $u^+(t,x(t))=u^-(t,x(t))$, where $u^{\pm}(t,x(t))$ are the left and right traces of $u$ along $(t,x(t))$. It is well-known that $BV$ solutions admit generalized characteristics starting at any value $x(0)=x_0$ \cite[\S~10.2]{dafermos}, but the argument uses a stronger notion of traces than the one available for finite-entropy solutions (see the discussion in the introduction of \cite{stab_shock}).
Existence of generalized characteristics is also crucial in several recent works using relative entropy methods for hyperbolic systems of conservation laws (see e.g. \cite{krupa--criteria,chen-krupa-vasseur,krupa-vasseur20}).

Our main result Theorem~\ref{main} establishes the existence of generalized characteristics for finite-entropy solutions. As a corollary, the results in \cite{stab_shock} are valid for any finite-entropy solution of Burgers' equation.

\begin{theorem} \label{main}
Let $u\colon [0,T]\times\R\to\R$ be a finite-entropy weak solution of \eqref{eq:burgers} with strictly convex flux $f$. 
For any $x_0\in \R$, there exists a generalized characteristic of $u$ starting at $x_0$, that is, a Lipschitz curve $x\colon [t_0,T]\to\R$ such that $x(0)=x_0$ and
\begin{align*}
x'(t) = f'(u^\pm(t,x(t)))\qquad\text{for a.e. }t\in [0,T]\text{ s.t. }u^+(t,x(t))=u^-(t,x(t)),
\end{align*}
where $u^\pm(t,x(t))$ denote the left and right traces of $u$ along $(t,x(t))$.
\end{theorem}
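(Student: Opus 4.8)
The plan is to construct the curve from the Lagrangian representation of the hypograph of $u$ due to the third author, and to certify its speed at continuity points by means of the entropy-flux decomposition formula announced in the abstract. I would first recall that this representation furnishes a finite measure $\omega$ on a set of \emph{characteristics}, each an absolutely continuous map $t\mapsto(\gamma(t),v(t))$ contained in the closed hypograph $\{(t,x,v):v\le u(t,x)\}$ and satisfying the transport law $\dot\gamma(t)=f'(v(t))$ for a.e.\ $t$, with $v(\cdot)$ piecewise constant; the jumps of $v$ along characteristics, together with their creation and absorption at shocks, encode the entropy production. Since $u$ is bounded, $|\dot\gamma|\le f'(\|u\|_{\infty})=:M$ along every characteristic, so the whole family is uniformly $M$-Lipschitz in $t$.

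To obtain a characteristic issuing from $x_0$ I would adapt Dafermos' maximal characteristic construction to this Lagrangian setting. For $\delta>0$ I select an $\omega$-characteristic passing within distance $\delta$ of $(0,x_0)$, extended through shocks --- concatenating with the relevant shock arc whenever it is absorbed --- so as to be defined on all of $[0,T]$; letting $\delta\to0$, the uniform Lipschitz bound and the Arzel\`a--Ascoli theorem produce a limit $x\colon[0,T]\to\R$ with $x(0)=x_0$. By closedness of this class under uniform convergence, $x(\cdot)$ is again of the same type: off the closed set of times at which it coincides with a shock it runs along a characteristic of some level $v(t)$, and there $\dot x(t)=f'(v(t))$. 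The one-sided traces $u^{\pm}(t,x(t))$ exist along $x(\cdot)$ in the weak sense available for finite-entropy solutions.

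At a point of density one where $u^{+}=u^{-}=:u$ the curve is not on a shock, so $\dot x(t)=f'(v(t))$, and it suffices to show that the carried level satisfies $v(t)=u$. Here the decomposition formula enters: applied to the Lipschitz hypersurface $\Sigma=\{x=x(t)\}$ and to the field $G=(\eta(u),q(u))$, whose distributional divergence is the measure $\mu_\eta$, it equates the one-sided normal traces $G^{\pm}\!\cdot\nu$ --- computed from the weak traces $u^{\pm}$ of the solution --- with a Lagrangian expression summing the contributions of the characteristics that reach $\Sigma$, weighted by $\eta'$ at their levels. Matching the two sides at the continuity point for every convex $\eta$, and invoking the strict convexity $f''\ge\alpha$ to disentangle the contributions of distinct levels, forces the level reaching $\Sigma$ along the curve to coincide with the common trace $u$; hence $\dot x(t)=f'(v(t))=f'(u)$.

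The delicate point is precisely this identification of the carried level with the weak trace. For finite-entropy solutions the strong one-sided limits underlying the classical $BV$ argument are unavailable, so the equality $v=u^{\pm}$ at continuity points cannot be read off pointwise and must instead be forced through the decomposition formula together with the strict convexity of $f$. Establishing that formula in the stated generality, controlling the $\omega$-flux reaching $\Sigma$ quantitatively, and ruling out cancellations between characteristics carrying different levels is where I expect the main difficulty to lie.
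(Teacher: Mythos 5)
There is a genuine gap --- in fact two, and they are precisely the points where the paper's proof does something different from what you propose. First, the construction of the curve. You follow a single $\omega$-characteristic near $(0,x_0)$, ``extended through shocks --- concatenating with the relevant shock arc whenever it is absorbed,'' and pass to a limit by Arzel\`a--Ascoli. For finite-entropy solutions there is no ``shock arc'' to concatenate with: the absence of a usable shock structure (only weak one-sided traces, no $BV$-type jump set organized into curves) is exactly the obstruction that makes Dafermos' construction fail in this setting, as the paper's introduction stresses. Even granting some concatenation, your claim that the class of such curves is ``closed under uniform convergence,'' so that the limit is again of the same type, is unsupported --- establishing any such closure would amount to solving the hard part of the problem. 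The paper instead builds $x(t)$ as a barrier curve: $\hat\gamma_{t,x}(s)$ is the right-most position reachable at time $s$ by hypograph curves lying to the left of $x$ at time $t$; this is iterated over dyadic time intervals, and the limit curve is shown (Lemma~\ref{l:nocross}) to satisfy \eqref{left} and \eqref{right}, i.e.\ it cannot be crossed from the left by hypograph curves nor from the right by epigraph curves. The property \eqref{right} is where strict convexity enters, via \cite[Proposition~6]{marconi-rectif} (epigraph curves cannot overtake hypograph curves) --- an ingredient entirely absent from your proposal.

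Second, the certification of the speed at continuity points. You hope that the flux formula plus strict convexity will ``disentangle the contributions of distinct levels'' and identify a carried level $v(t)$ with the common trace $u_0$; you yourself flag this as the delicate point, without supplying a mechanism, and indeed no such level identification is made in the paper. Instead, the argument derives signed inequalities: for a nondecreasing entropy $\eta$ with $\eta(0)=0$, the no-crossing property \eqref{left} kills the $I_\gamma^+$ (left-to-right crossing) term in the flux formula, monotonicity of $\eta'$ makes the $I_\gamma^-$ term nonpositive, and the bounce term $B_\gamma^-$ is controlled through \eqref{eq:nugamma} by the local entropy dissipation, which by \cite{delellis-otto-westdickenberg} satisfies \eqref{eq:nodissip} at a.e.\ point where the traces agree. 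This yields $q(u_0)-x'(t_0)\eta(u_0)\le 0$ for all such entropies; the Kruzhkov-type choices $\eta(x)=(x-a)\mathbf 1_{x\ge a}$ with $a\uparrow u_0$ give $x'(t_0)\ge f'(u_0)$, and the symmetric argument with the epigraph representation and \eqref{right} gives the reverse inequality. Without the barrier construction and the no-crossing input, the flux formula alone contains crossing contributions of both signs and cannot force any conclusion about $x'(t_0)$; so, as written, your scheme cannot be completed along the lines you indicate.
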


\begin{remark}
Finite-entropy solutions have traces which are reached strongly in $L^1$:
\begin{align*}
\esslim_{y\to 0^+}\int_0^T \abs{u(t,x(t)\pm y)- u^\pm(t,x(t))}\, dt =0.
\end{align*}
This is proved in \cite{vasseur--traces} for entropy solutions, but the proof uses only a kinetic formulation which is also valid for finite-entropy solutions \cite{delellis-otto-westdickenberg}.
In particular, for a.e. $ t\in [0,T]$ such that  $u^+(t,x(t))\ne u^-(t,x(t))$ we have the Rankine-Hugoniot condition
\begin{equation*}
x'(t) = \frac{f(u^+(t,x(t))) - f(u^-(t,x(t)))}{u^+(t,x(t)) - u^-(t,x(t))}.
\end{equation*}

\end{remark}

The strategy of proof relies  on the Lagrangian representation recently introduced by the third author \cite{marconi-structure,marconi-rectif}.
A central ingredient is valid for general multidimensional scalar conservation laws and is of independent interest: in Theorem~\ref{fluxformula} we obtain a formula for the entropy flux across a hypersurface in terms of the Lagrangian representation. In order to state this result, we consider  finite-entropy solutions of
\begin{align}\label{eq:scl}
\partial_t u +\nabla_x\cdot f(u)=0,\quad t\in (0,T), \; x\in\R^d,
\end{align}
where the flux $f$ is now any $C^2$ function $f\colon\R\to \R^d$, and the finite-entropy condition amounts to
\begin{align*}
\mu_\eta :=\partial_t\eta(u) +\nabla_x\cdot q(u)\quad\text{ is a Radon measure for all convex }\eta,
\end{align*}
and associated entropy flux $q$ given by $q'=\eta'f'$. 
In \cite{marconi-structure} the third author proves the existence of a Lagrangian representation of the hypograph of $u$, that is, a nonnegative finite measure $\omega_h$ on the set of curves
\begin{align*}
\Gamma=\left\lbrace \gamma=(\gamma_x,\gamma_v)\in BV([0,T);\R^d\times [0,1]\colon \gamma_x\text{ is Lipschitz}\right\rbrace,
\end{align*}
with the following three properties.
\begin{itemize}
\item For all $t\in [0,T)$, the pushforward $(e_t)\sharp\omega_h$ by the evaluation map $e_t\colon \gamma\mapsto \gamma(t^+)$ is uniform on the hypograph of $u(t)$,
\begin{align}\label{eq:etomega}
(e_t)\sharp\omega_h=\mathcal L^{d+1}\lfloor \left\lbrace (x,v)\in \R^d\times [0,1] \colon v<u(t,x)\right\rbrace. 
\end{align}
\item The measure $\omega_h$ is concentrated on curves $\gamma\in \Gamma$ satisfying the characteristic differential equation
\begin{align}\label{eq:characlag}
 \gamma_x' (t)=f'(\gamma_v(t))\quad\text{for a.e. }t\in [0,T).
\end{align}
In particular $\gamma_x$ is $S$-Lipschitz for $\omega_h$-a.e. $\gamma$, where $S=\sup |f'|([0,1])$ is the maximal speed.
\item The total variation of $\gamma_v$ is controlled by
\begin{align}\label{eq:totvarlag}
\int_\Gamma \mathrm{TotVar}\:\gamma_v\, d\omega_h <\infty.
\end{align}
\end{itemize}
\begin{remark}\label{r:epirep}
One can also define a Lagrangian representation $\omega_e$ of the epigraph of $u$. This representation satisfies the same properties as the representation of the hypograph, where \eqref{eq:etomega} is replaced by
\begin{align*}
(e_t)\sharp\omega_e=\mathcal L^{d+1}\lfloor \left\lbrace (x,v)\in \R^d\times [0,1] \colon v> u(t,x)\right\rbrace. 
\end{align*}
We will sometimes loosely refer to typical curves chosen according to the measure $\omega_h$
 (respectively $\omega_e$), as curves of the hypograph (respectively epigraph).
\end{remark}

Our second main result is a decomposition formula for the entropy flux across a given hypersurface, along the curves of the Lagrangian representation.

\begin{theorem} \label{fluxformula}
Let $u$ be a finite-entropy solution of \eqref{eq:scl} with Lagrangian representation $\omega_h$ of its hypograph. Let $\Sigma \subset (0,T)\times \R^d$ be a Lipschitz hypersurface. 
Then $\omega_h$-almost every curve $\gamma\in \Gamma$ intersects $\Sigma$ at most a finite number of times, in the sense that
\begin{align*}
\left\lbrace t\in (0,T)\colon (t,\gamma_x(t))\in\Sigma \right\rbrace\quad\text{ is finite.}
\end{align*}
For any open set $U\subset (0,T)\times\R^d$ such that $U\setminus\Sigma$ has two connected components $\Sigma^\pm$, denote by $\nu$ the unit normal vector to $\Sigma$ pointing from $\Sigma^-$ to $\Sigma^+$ and by $u^\pm$ the traces of $u$ on the corresponding sides. Then, for any entropy-entropy flux pair $(\eta,q)$ with 
\begin{align*}
\eta(0)=0,\quad q(0)=0,
\end{align*}
 the entropy flux across $\Sigma$ from $\Sigma^-$ satisfies
\begin{align*}
&\int_\Sigma \nu\cdot (\eta(u^-) , q(u^-))\, \Phi \, d\mathcal H^d 
=
\int_\Gamma \langle F^-_\gamma ,\eta \otimes \Phi \rangle\, d \omega_h (\gamma)\quad\text{for all }\Phi\in C_c^\infty(U),
\end{align*}
where
\begin{align}\label{eq:Fgamma}
\langle F^-_\gamma ,\eta \otimes \Phi \rangle & =\sum_{t\in  I^+_\gamma   }   \eta'(\gamma_v(t^-))\Phi(t,\gamma_x(t)) - \sum_{t\in I_\gamma^- }   \eta'(\gamma_v(t^+))\Phi(t,\gamma_x(t))\\
&\quad
+\sum_{t\in B_\gamma^-}(\eta'(\gamma_v(t^-)) -  \eta'(\gamma_v(t^+)))\Phi(t,\gamma_x(t))\nonumber
\end{align}
Here $I_\gamma^\pm$ and $B_\gamma^-$ are disjoint subsets of the intersection times of $(t,\gamma_x(t))$ with $U\cap\Sigma$, $I_\gamma^+$ corresponding to crossings from $\Sigma^-$ to $\Sigma^+$, $I_\gamma^-$ to crossings from $\Sigma^+$ to $\Sigma^-$, and $B_\gamma^-$ to bounces on $\Sigma$ from $\Sigma_-$.
\end{theorem}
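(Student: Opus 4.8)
The plan is to combine two ingredients: a finiteness statement guaranteeing that for $\omega_h$-a.e.\ curve the intersection times with $\Sigma$ form a finite set, and an identity expressing the one-sided flux through the divergence structure of the entropy-flux field. Write $G=(\eta(u),q(u))$ for the space-time entropy-flux field, so that $\mathrm{div}_{(t,x)}G=\mu_\eta$ as distributions. Since $\Sigma$ is Lipschitz, $\Sigma^-$ is an open set with Lipschitz boundary inside $U$, and since $G\in L^\infty$ with $\mathrm{div}\,G$ a measure, it admits a weak normal trace on $\partial\Sigma^-$ coinciding with $\nu\cdot(\eta(u^-),q(u^-))$. Applying the divergence theorem to $\Phi G$ on $\Sigma^-$ (with $\Phi$ vanishing near $\partial U$) gives the basic identity
\begin{align*}
\int_\Sigma \nu\cdot(\eta(u^-),q(u^-))\,\Phi\,d\mathcal{H}^d
=\int_{\Sigma^-}\Phi\,d\mu_\eta+\int_{\Sigma^-}G\cdot\nabla\Phi\,dt\,dx.
\end{align*}
It then suffices to represent each term on the right through $\omega_h$ and recognize their sum as $\int_\Gamma\langle F^-_\gamma,\eta\otimes\Phi\rangle\,d\omega_h$.

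The representation I would use is a single per-curve integration by parts. Fix a curve $\gamma$ with finitely many intersection times, set $g(t)=\eta'(\gamma_v(t))$ (a $BV$ function) and $\phi(t)=\Phi(t,\gamma_x(t))$ (Lipschitz, since $\gamma_x$ is), and let $A=\{t:(t,\gamma_x(t))\in\Sigma^-\}$, which by finiteness is a finite union of open intervals. Computing $\int_A d(g\phi)$ by the fundamental theorem of calculus on each interval, the endpoint contributions are exactly the terms in \eqref{eq:Fgamma}: an exit into $\Sigma^+$ contributes $+\eta'(\gamma_v(t^-))\Phi$, an entry from $\Sigma^+$ contributes $-\eta'(\gamma_v(t^+))\Phi$, and a bounce from $\Sigma^-$ (where $\phi$ is continuous but $g$ may jump) contributes $(\eta'(\gamma_v(t^-))-\eta'(\gamma_v(t^+)))\Phi$, so that $\langle F^-_\gamma,\eta\otimes\Phi\rangle=\int_A d(g\phi)$. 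Expanding by the $BV$--Lipschitz product rule and using the characteristic equation \eqref{eq:characlag} to write $\phi'=(1,f'(\gamma_v))\cdot\nabla\Phi$, one gets $\int_A d(g\phi)=\int_A \eta'(\gamma_v)\,(1,f'(\gamma_v))\cdot\nabla\Phi\,dt+\int_A\Phi\,d[\eta'(\gamma_v)]$. Integrating against $\omega_h$ and using the layer-cake decomposition $G=\int_0^1\eta'(w)(1,f'(w))\mathbf{1}_{w<u}\,dw$ together with the time-integrated form of \eqref{eq:etomega}, the first term becomes $\int_{\Sigma^-}G\cdot\nabla\Phi$, while the second is $\int_{\Sigma^-}\Phi\,d\mu_\eta$ once one establishes the Lagrangian representation of the entropy production,
\begin{align*}
\langle\mu_\eta,\varphi\rangle=\int_\Gamma\int_{(0,T)}\varphi(t,\gamma_x(t))\,d[\eta'(\gamma_v)]\,d\omega_h(\gamma),
\end{align*}
which itself follows from the same layer-cake identity and an integration by parts in $t$. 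Restricting $\varphi$ to $\Phi\mathbf{1}_{\Sigma^-}$ by approximation from inside the open set $\Sigma^-$ selects precisely the jumps occurring in $A$; the jumps located on $\Sigma$ are instead absorbed into the boundary terms, consistently with the decomposition above. Integrability throughout, and the dominated convergence in the approximation, are guaranteed by the total variation bound \eqref{eq:totvarlag} since $\eta'$ is Lipschitz on $[0,1]$.

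The genuine obstacle is the finiteness statement, which is also what makes $F^-_\gamma$ a finite sum and $A$ a finite union of intervals. Writing $\Sigma$ locally as a Lipschitz graph with defining function $\rho$ ($\Sigma=\{\rho=0\}$, $\Sigma^\pm=\{\pm\rho>0\}$), the intersection times are the zeros of the Lipschitz function $h(t)=\rho(t,\gamma_x(t))$, whose a.e.\ derivative is $h'(t)=\tilde\nu\cdot(1,f'(\gamma_v(t)))$ with $\tilde\nu$ a normal to $\Sigma$. The plan is to show that, for $\omega_h$-a.e.\ curve, these zeros do not accumulate: degenerate behavior such as the curve lying on $\Sigma$ for positive time is excluded because $\Sigma$ is $\mathcal{L}^{1+d}$-null, so the curves spend zero time on $\Sigma$ by \eqref{eq:etomega}; accumulation of transversal crossings is controlled by combining the bounded speed $S=\sup|f'|$ with the $BV$ regularity of $\gamma_v$, the point being that $h$ is monotone on time intervals where the sign of $\nu\cdot(1,f'(\gamma_v))$ is fixed, and the number of such sign changes is governed by the variation of $\gamma_v$ through \eqref{eq:totvarlag}. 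Making this rigorous for a merely Lipschitz (hence only measurable-normal) hypersurface is the delicate point; I expect to handle it by localization and a covering argument, reducing to controlling on a set of full $\omega_h$-measure the number of sign changes of $h$, and treating the remaining tangential contacts as negligible via the null-time property. Once finiteness and the classification of each intersection as a crossing or a bounce are in place, the computation above yields the stated formula.
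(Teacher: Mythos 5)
Your derivation of the flux identity \emph{given} the finiteness of intersections is essentially sound, and it takes a slightly different route from the paper's: you invoke the Gauss--Green formula for bounded divergence-measure fields on the Lipschitz domain $\Sigma^-$ together with a Lagrangian representation of $\mu_\eta$, whereas the paper (Lemmas~\ref{firstformula} and \ref{intermediateformula}) approximates $\mathbf 1_{\overline{\Sigma^+}}$ by Lipschitz cutoffs $G_\delta$, obtains an intermediate per-curve functional $\widetilde F_\gamma$, and converts it into $F_\gamma^-$ by a $BV$ product rule. Both routes rest on the same ingredients (layer-cake formula, \eqref{eq:etomega}, the characteristic ODE \eqref{eq:characlag}, integration by parts along curves), and your bookkeeping of exits, entries and bounces reproduces \eqref{eq:Fgamma} with the correct signs; your approach additionally needs the identification of the weak normal trace with $\nu\cdot(\eta(u^-),q(u^-))$, which is comparable in delicacy to the paper's first step.

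The genuine gap is in the finiteness statement, which you rightly single out as the main obstacle but for which the mechanism you propose cannot work. First, the chain rule $h'(t)=\tilde\nu\cdot(1,f'(\gamma_v(t)))$ is not justified for a merely Lipschitz defining function $\rho$ composed with a Lipschitz curve: the graph of the curve is $\mathcal L^{d+1}$-null, so it may consist entirely of points where $\rho$ fails to be differentiable. Second, and more fundamentally, no per-curve bound on the number of intersections in terms of $\mathrm{TotVar}\,\gamma_v$ and the speed bound can hold: in $d=1$ take $\Sigma=\{(t,h(t))\}$ with $h(t)=\frac14(t_0-t)\sin(\log(t_0-t))$ for $t<t_0$ (Lipschitz, with zeros accumulating at $t_0$) and an admissible curve with $\gamma_v\equiv v_0$, $f'(v_0)=0$, $\gamma_x\equiv 0$; it satisfies \eqref{eq:characlag}, has $\mathrm{TotVar}\,\gamma_v=0$, and intersects $\Sigma$ infinitely many times. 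The sign changes of $\tilde\nu\cdot(1,f'(\gamma_v))$ come from the oscillation of the normal $\tilde\nu$, which is only an $L^\infty$ object and is controlled by nothing in \eqref{eq:totvarlag}. Hence finiteness is intrinsically a statement about $\omega_h$-\emph{almost every} curve, not about each curve, and the proof must be measure-theoretic over the ensemble. Your ``null-time'' observation does not fill this: zero residence time on $\Sigma$ is perfectly compatible with infinitely many isolated (or Cantor-like) intersection times, and it also does not dispose of the non-differentiability points of $\Sigma$. What is needed is exactly what the paper supplies: Lemma~\ref{l:nullinter} (curves a.s.\ avoid the $\mathcal H^d$-null set of non-differentiability points, via a covering argument using $(e_t)\sharp\omega_h\le\mathcal L^{d+1}$), and then the quantitative estimate $\omega_h(X_{R,\varepsilon}^\delta)\lesssim\varepsilon$ in the proof of Lemma~\ref{l:graphfinite}, showing that curves arriving tangentially at a differentiability point of $\Sigma$ form a null set, again by time-slicing and $(e_t)\sharp\omega_h\le\mathcal L^{d+1}$. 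That quantitative tangency estimate is the missing idea, and nothing in your sketch substitutes for it.
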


\begin{remark}
Specifically, the sets $I^\pm_\gamma$, $B_\gamma^-$ appearing in Theorem~\ref{fluxformula} are given  by
\begin{align*}
I_\gamma^+ =\left\lbrace t\in (0,T)\colon (t,\gamma_x(t))\in U\cap\Sigma,\, (t\pm\delta,\gamma_x(t\pm\delta))\in \Sigma^\pm\text{ for }0<\delta\ll 1\right\rbrace,\\
I_\gamma^- =\left\lbrace t\in (0,T)\colon (t,\gamma_x(t))\in U\cap\Sigma,\, (t\pm\delta,\gamma_x(t\pm\delta))\in \Sigma^\mp\text{ for }0<\delta\ll 1\right\rbrace,\\
B_\gamma^- =\left\lbrace t\in (0,T)\colon (t,\gamma_x(t))\in U\cap\Sigma,\, (t\pm\delta,\gamma_x(t\pm\delta))\in \Sigma^-\text{ for }0<\delta\ll 1\right\rbrace.
\end{align*}
\end{remark}

\begin{remark}\label{r:epiflux}
For the Lagrangian representation $\omega_e$ of the epigraph of $u$ (see Remark~\ref{r:epirep}), the identity of Theorem~\ref{fluxformula} becomes, 
\begin{align*}
&\int_\Sigma (\eta(u^-) , q(u^-))\cdot \nu\,\Phi(t,x)\, d\mathcal H^d(t,x)
= - \int_\Gamma \langle F^-_\gamma ,\eta \otimes \Phi \rangle\, d \omega_e (\gamma),
\end{align*}
provided $\eta(1)=0$ and $q(1)=0$. 
\end{remark}

Once the first assertion of Theorem~\ref{fluxformula} is established, that typical curves of the hypograph have finite intersection with $\Sigma$, the flux formula \eqref{eq:Fgamma} follows from rather natural manipulation, using the Lagrangian property \eqref{eq:etomega} that $(e_t)\sharp\omega_h=\mathbf 1_{v<u(t,x)}\, dxdv$ in order to link values of $u$ with the Lagrangian representation. The finite intersection property is a consequence of a transverse intersection property: tangential intersections are negligible, essentially thanks to the fact that $\Sigma$ is of codimension 1 while $(e_t)\sharp \omega_h$ is absolutely continuous with respect to the Lebesgue measure.

The proof of Theorem~\ref{main} uses the flux formula of Theorem~\ref{fluxformula} and the property, established in \cite{marconi-rectif}, that for Burgers' equation \eqref{eq:burgers} curves of the hypograph cannot cross from the left curves of the epigraph. This enables us to construct $x(t)$ as a curve that cannot be crossed from the left by any curve of the hypograph, nor from the right by curves of the epigraph. This implies, via the flux formula \eqref{eq:Fgamma} from which some terms can then be dropped, inequalities on the entropy flux across $x(t)$ that can only be satisfied by a generalized characteristic.

The article is organized as follows. In  Section~\ref{s:proof} we prove  Theorem~\ref{main} as a consequence of Theorem~\ref{fluxformula}, whose proof is given in Section~\ref{s:flux}.

\section{Proof of Theorem \ref{main}}\label{s:proof}

In this section we prove Theorem~\ref{main}, that is, the existence of characteristics for finite-entropy solutions of Burgers. Aside from the flux formula from Theorem~\ref{fluxformula}, the main tool is the existence of a curve $x(t)$ that cannot be crossed from the left by (typical) curves of the hypograph, nor from the right by curves of the epigraph. This is the only place where we need the strict convexity of the flux $f$.

\begin{lemma}\label{l:nocross}
Let $u\colon [0,T]\times\R\to [0,1]$ be a finite-entropy  solution of \eqref{eq:burgers} with strictly convex flux $f$. 
For any $x_0\in \R$, there exists a  Lipschitz curve $x\colon [0,T]\to\R$ such that $x(0)=x_0$, and 
\begin{equation} \label{left}
\omega_h (\{ \gamma \in \Gamma \colon \exists t_1 < t_2 ,\; \gamma_x(t_1) < x(t_1) , \gamma_x(t_2) > x(t_2) \}) = 0,
\end{equation}
and
\begin{equation} \label{right}
\omega_e (\{ \gamma \in \Gamma \colon \exists t_1 < t_2,\;  \gamma_x(t_1) > x(t_1) , \gamma_x(t_2) < x(t_2) \}) = 0.
\end{equation}
\end{lemma}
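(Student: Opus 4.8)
The plan is to construct $x$ as an extremal curve separating the hypograph curves from the epigraph curves, exploiting the key non-crossing property established in \cite{marconi-rectif}: for $\omega_h\otimes\omega_e$-a.e.\ pair $(\gamma,\tilde\gamma)$ consisting of a hypograph curve $\gamma$ and an epigraph curve $\tilde\gamma$, one cannot have $\gamma_x(s_1)<\tilde\gamma_x(s_1)$ and $\gamma_x(s_2)>\tilde\gamma_x(s_2)$ for $s_1<s_2$; that is, a hypograph curve never overtakes an epigraph curve from the left. Concretely, I would set
\[
\underline x(t)=\sup\{\gamma_x(t):\gamma\in\mathrm{supp}\,\omega_h,\ \gamma_x(0^+)\le x_0\},\qquad \overline x(t)=\inf\{\tilde\gamma_x(t):\tilde\gamma\in\mathrm{supp}\,\omega_e,\ \tilde\gamma_x(0^+)\ge x_0\}.
\]
Both are $S$-Lipschitz, since each admissible curve is $S$-Lipschitz by \eqref{eq:characlag} and the starting positions are pinned near $x_0$; moreover the covering property \eqref{eq:etomega} (together with its epigraph analogue from Remark~\ref{r:epirep}) guarantees that hypograph and epigraph curves do start arbitrarily close to $x_0$, so that $\underline x(0^+)=\overline x(0^+)=x_0$, the degenerate cases where one of the two families is empty near $x_0$ being treated separately.

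Next I would record the ordering $\underline x\le\overline x$: any admissible hypograph curve $\gamma$ and epigraph curve $\tilde\gamma$ satisfy $\gamma_x(0^+)\le x_0\le\tilde\gamma_x(0^+)$, hence are ordered at $t=0$, and the non-crossing property forbids them from ever swapping, so $\gamma_x(t)\le\tilde\gamma_x(t)$ for all $t$. I would then verify each one-sided condition for the corresponding extremal curve. For \eqref{left} with $x=\overline x$: if a hypograph curve $\gamma$ crossed $\overline x$ from the left, then at the later time $t_2$ it would lie strictly to the right of some right-starting epigraph curve $\tilde\gamma$ (by the definition of the infimum), so by non-crossing it would already be weakly to the right of $\tilde\gamma$ at $t_1$, hence weakly to the right of $\overline x(t_1)$, contradicting that it was strictly to the left. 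Symmetrically, \eqref{right} holds for $x=\underline x$: an epigraph curve strictly to the left of $\underline x$ at $t_2$ lies strictly to the left of some left-starting hypograph curve, which by non-crossing forces it weakly to the left of $\underline x(t_1)$, contradicting that it was strictly to the right.

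The crux, and the step I expect to be the main obstacle, is to produce a single curve satisfying both \eqref{left} and \eqref{right} at once: the two arguments above attach \eqref{left} to $\overline x$ and \eqref{right} to $\underline x$, and neither transfers to an arbitrary curve lying strictly between them. I would therefore aim to prove that in fact $\underline x=\overline x$, so that this common curve inherits both representations and hence both conditions. The difficulty is that the non-crossing property alone does not exclude a gap $\underline x(t)<\overline x(t)$, since it permits \emph{swapped} curves (hypograph curves starting to the right of $x_0$ and epigraph curves starting to the left) to cross each other in the allowed direction and populate the intermediate region. Closing this gap is where the finer structure of the representation must enter: I would use the covering identity \eqref{eq:etomega}, which forces the hypograph and epigraph to tile the strip $\R\times[0,1]$ along the interface $\{v=u(t,x)\}$ at every time, to show that the supremum of left-hypograph positions must in fact reach the infimum of right-epigraph positions. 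Once $\underline x=\overline x=:x$ is established, the two verifications above apply to the same curve, with the remaining care going into the Fubini-type argument needed to pass from the $\omega_h\otimes\omega_e$-a.e.\ non-crossing to statements valid for $\omega_h$-a.e.\ (resp.\ $\omega_e$-a.e.) individual curve.
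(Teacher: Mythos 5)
Your two envelopes are a reasonable starting point, and your verifications that $\overline x$ satisfies \eqref{left} and $\underline x$ satisfies \eqref{right} are essentially sound (modulo the essential-supremum/Fubini care you mention, and modulo the fact that $\underline x(0)=x_0$ can fail if $u(0,\cdot)$ vanishes on a left neighbourhood of $x_0$). But the step you yourself single out as the crux --- proving $\underline x=\overline x$ --- is not merely unproven: it is false, so the proposal cannot be completed along the route you describe. Take Burgers' flux $f(u)=u^2/2$ and the expansive (non-entropic, but finite-entropy) shock
\begin{equation*}
u(t,x)=\tfrac14 \text{ for } x<t/2,\qquad u(t,x)=\tfrac34 \text{ for } x>t/2,\qquad x_0=0 .
\end{equation*}
Hypograph curves with $\gamma_x(0)\le 0$ have $\gamma_v<1/4$, hence speed $<1/4$; they never reach the shock (speed $1/2$), hence never jump, since by \eqref{eq:nugamma} the variation of $\gamma_v$ is carried by the entropy dissipation, which is concentrated on the shock line. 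Thus $\underline x(t)=t/4$. Symmetrically, epigraph curves with $\gamma_x(0)\ge 0$ have $\gamma_v>3/4$, run ahead of the shock and never jump, so $\overline x(t)=3t/4$. Both families are nonempty, with curves starting arbitrarily close to $x_0$ on the correct side, so this is not one of your degenerate cases; yet $\underline x(t)=t/4<3t/4=\overline x(t)$ for all $t>0$. The gap between the envelopes is exactly the region filled by the ``swapped'' curves you were worried about (hypograph curves re-emitted to the right of the expansive shock, epigraph curves to its left), and no appeal to the covering identity \eqref{eq:etomega} can close it, because the gap is genuinely there. (In this example each envelope separately happens to satisfy both \eqref{left} and \eqref{right}, so the Lemma holds --- the failure is in the method, not the statement.)

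The paper sidesteps the two-envelope matching problem entirely, and this is the idea your proposal is missing. It works with the hypograph envelope only, but re-initializes it along a fine time grid: for $\delta=2^{-n}$ it builds $x^\delta$ piecewise on $(t_k,t_{k+1}]$, $t_k=k\delta$, as the essential right-most position reachable by hypograph curves lying to the left of $x^\delta(t_k)$ at time $t_k$, and then takes the monotone limit $n\to\infty$. This refreshing at a dense set of times is what yields \eqref{left}: your single time-$0$ envelope controls only curves starting to the left of $x_0$, whereas a hypograph curve arriving later from the right, dipping below $\underline x$ and then jumping to a higher speed could in principle re-cross it --- nothing forbids two hypograph curves from overtaking one another --- while the iterated envelope dominates every curve that sits to its left at some dyadic time. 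Property \eqref{right} is then obtained not from a second envelope but from the hypograph--epigraph non-crossing property of \cite[Proposition~6]{marconi-rectif} (inequality \eqref{eq:nocross}): an epigraph curve crossing the limit curve from the right would force a positive $\omega_h$-mass of hypograph curves, lying to its left at a dyadic time, to overtake it, a contradiction. In short, the epigraph enters only the verification, never the construction, and that is what allows a single curve to carry both properties.
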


\begin{proof}[Proof of Lemma~\ref{l:nocross}]
For any $(t,x)\in [0,T)\times \R$ we define a curve $\hat\gamma_{t,x}\colon [t,T]\to\R$ by
\begin{align*}
\hat{\gamma}_{t , x} (s) = \inf \{ y \in \R : \omega_h \{ \gamma \in \Gamma : \gamma_x (t) < x , \gamma_x (s) > y \} = 0 \}.
\end{align*}
In other words, $\hat\gamma_{t,x}(s)$ is the right-most value of $y$ that can be attained at time $s\geq t$ by curves of the hypograph passing left from $x$ at time $t$. Note that $\hat\gamma_{t,x}$ is $S$-Lipschitz because $\gamma_x$ is $S$-Lipschitz for $\omega_h$-a.e. $\gamma$.

To obtain a curve that cannot be crossed from the left by any curves of the hypograph, we iterate this construction on small time intervals. For $\delta>0$ we let $t_k=k\delta$ and define an $S$-Lipschitz curve $x^\delta\colon [0,T]\to\R$ iteratively on $(t_0,t_1]$, $(t_1,t_2]$, etc., by setting
\begin{align*}
x^\delta(0)&=x_0,\\
x^\delta(t)&=\hat\gamma_{t_k,x^\delta(t_k)}(t)\quad\text{for }t\in (t_k,t_{k+1}]\cap [0,T],\quad k\delta\leq T.
\end{align*}
For all $t\in [0,T]$, the sequence $x^{1/2^{n}}(t)$ is monotone, and we set
\begin{align*}
x(t)=\lim_{n\to\infty} x^{1/2^{n}}(t)=\sup_{n>0} x^{1/2^{n}}(t).
\end{align*}
From the definitions of $x^\delta$ and $\hat\gamma_{t,x}$ we have that, for any $m\geq n>0$,
\begin{align*}
\omega_h\left(\left\lbrace \gamma\colon \exists t_1\in 2^{-n}\mathbb N,\, t_2>t_1,\, \gamma_x(t_1)<x^{1/2^m}(t_1),\,\gamma_x(t_2)>x^{1/2^m}(t_2)\right\rbrace\right)=0,
\end{align*}
and since $x(t)=\lim x^{1/2^m}(t)$ we deduce that
\begin{align*}
\omega_h\left(\left\lbrace \gamma\colon \exists t_1\in \bigcup_{n>0}(2^{-n}\mathbb N),\, t_2>t_1,\, \gamma_x(t_1)<x(t_1),\,\gamma_x(t_2)>x(t_2)\right\rbrace\right)=0.
\end{align*}
Property \eqref{left} follows because $x$ is Lipschitz, and so is $\gamma_x$ for any $\gamma\in\Gamma$.

Property \eqref{right} is a consequence of the fact, proven in \cite[Proposition~6]{marconi-rectif}, that curves of the epigraph cannot cross from the right curves of the hypograph, because $f$ is strictly convex. Specifically, exchanging the roles of $\omega_e$ and $\omega_h$ in \cite[Proposition~6]{marconi-rectif}, we have that $\omega_e$ is concentrated on a set $\Gamma_e$ such that, for any $\bar\gamma\in\Gamma_e$ and $t_1<t_2\in [0,T]$,
\begin{align}\label{eq:nocross}
\omega_h\left(\left\lbrace\gamma\colon \gamma_x(t_1)<\bar\gamma_x(t_1),\,\gamma_x(t_2)>\bar\gamma_x(t_2) \right\rbrace\right)=0.
\end{align}
Assuming that $\Gamma_e$ has non-empty intersection with the set in \eqref{right} and using the definition of $x(t)$, we would obtain a curve $\bar\gamma\in\Gamma_e$ and $t_1<t_2$ such that
\begin{align*}
\bar\gamma_x (t_1)> x^{1/2^n}(t_1) \quad\text{and}\quad \bar\gamma_x(t_2)< x^{1/2^n}(t_2),
\end{align*}
for some large enough $n$, and so there is $k\geq 0$, $t_k=k/2^n$, and $t\in (t_k,t_{k+1}]$ such that
\begin{align*}
\bar\gamma_x (t_k)\geq  x^{1/2^n}(t_k) \quad\text{and}\quad \bar\gamma_x(t) < x^{1/2^n}(t)=\hat\gamma_{t_k,x^{1/2^n}(t_k)}(t).
\end{align*}
By definition of $\hat\gamma_{t,x}$ this implies that
\begin{align*}
\omega_h\left(\left\lbrace \gamma\colon \gamma_x(t_k)<\bar\gamma_x(t_k),\,\gamma_x(t)>\bar\gamma_x(t)\right\rbrace\right)>0,
\end{align*}
thus contradicting \eqref{eq:nocross} and concluding the proof of \eqref{right}.
\end{proof}

The rest of Theorem~\ref{main}'s proof consists in showing that the curve $x(t)$ provided by Lemma~\ref{l:nocross} is a generalized characteristic. 
Thanks to the property \eqref{left} ensuring that curves of the hypograph typically  do not cross $x(t)$ from the left, in the flux formula \eqref{eq:Fgamma} for the flux $F_\gamma^-$ along a curve $\gamma$ across $x(t)$, there will be no contribution of the set $I_\gamma^+$ (times of crossings from left to right). Moreover, at typical times $t$ where the traces $u^\pm(t,x(t))$ agree, the contribution of the set $B_\gamma^-$ (times of bounces from the left) will be negligible. As a consequence, for monotone  entropies $\eta$ the flux across $x(t)$ will have a sign, providing a lower bound on $x'(t)$. The matching upper bound is then obtained similarly by using the property \eqref{right} that curves of the hypograph typically do not cross $x(t)$ from the right.

Before proceeding to the proof of Theorem~\ref{main}, let us be more specific about why the contribution of $B_\gamma^-$ will be negligible at points where the traces agree. This is due to the fact that entropy dissipation can be decomposed along the Lagrangian representation, and jumps in $\gamma_v$ create an amounts of entropy dissipation that is incompatible with the absence of jump $u^+(t,x(t))=u^-(t,x(t))$. We recall here the relevant result from \cite{marconi-structure}. We denote by $\nu$ the total entropy dissipation
\begin{align}\label{eq:nu}
\nu=\bigvee_{|\eta''|\leq 1 } |\mu_\eta|,
\end{align}
where $\bigvee$ stands for the lowest upper bound of a family of measures (as defined e.g. in \cite[Definition~1.68]{AFP}). As a consequence of \cite[Propositions~5.11 \& 5.12]{marconi-structure} there is a Lagrangian representation $\omega_h$ such that
\begin{align}\label{eq:nugamma}
\int_\Gamma |D\gamma_v|(\lbrace t\in A \colon \gamma_x(t)\in B\rbrace ) \, d\omega_h(\gamma) =\nu(A\times B),
\end{align}
for any borelian sets $A\subset [0,T]$, $B\subset\R$.

\begin{remark}
It is proved in \cite{marconi-rectif} that for Burgers equation the measure $\nu$ is actually equal to $|\mu_{\eta_0}|$ for $\eta_0(u)=u^2/2$, but we won't need it here.
\end{remark}

\begin{proof}[Proof of Theorem \ref{main}]
We assume without loss of generality that $u$ takes values in $[0,1]$ and show that the curve $x(t)$ from Lemma~\ref{l:nocross} is a generalized characteristic. As a consequence of \cite{delellis-otto-westdickenberg}, at almost every $t_0\in [0,T]$ such that the traces $u^\pm(t_0,x(t_0))$ are equal, we must have
\begin{align}\label{eq:nodissip}
\lim_{r\to 0}\frac{\nu(B_r(t_0,x_0))}{r}=0,
\end{align}
where $x_0=x(t_0)$ and $\nu$ is the total entropy dissipation \eqref{eq:nu}. Hence we fix $t_0$ a Lebesgue point of $x'$ such that $(t_0,x_0)$ is a Lebesgue point of $u^\pm$ satisfying \eqref{eq:nodissip}, and  prove that $x'(t_0)=f'(u_0)$, where $u_0=u^+(t_0,x(t_0))=u^-(t_0,x(t_0))\in (0,1)$.

Let $\eta$ be a nondecreasing $C^2$ entropy such that $\eta(0)=0$ and $q$ be the associated entropy flux with $q(0)=0$.
Given a non-negative test function $\Phi$ and applying the flux formula of Theorem~\ref{fluxformula} to the curve $x$, we obtain
\begin{align*}
& \int_0^T q(u^-(t,x(t))) - x'(t) \eta(u^-(t,x(t)))\, \Phi(t,x(t)) dt   \\
& = \int_\Gamma \sum_{t\in I_\gamma^+} \eta'(\gamma_v(t^-)) \Phi(t,x(t)) \, d\omega_h(\gamma)  -  \int_\Gamma \sum_{t\in I_\gamma^-} \eta'(\gamma_v(t^+)) \Phi(t,x(t))\, d\omega_h(\gamma) \\
 &\quad +   \int_\Gamma\sum_{t\in B_\gamma^-} (\eta'(\gamma_v(t^-))-\eta'(\gamma_v(t_+)))\, d\omega_h(\gamma).
\end{align*}
The first term in the right-hand side is zero thanks to the property \eqref{left} that typical curves of the hypograph don't cross $x(t)$ from the left. The second term is nonpositive because $\eta$ is non decreasing, so we deduce
\begin{align}\label{eq:estimflux}
& \int_0^T q(u^-(t,x(t))) - x'(t) \eta(u^-(t,x(t))) \Phi(t,x(t)) dt   \\
& \leq  \int_\Gamma\sum_{t\in B_\gamma^-} (\eta'(\gamma_v(t^-))-\eta'(\gamma_v(t_+)))\, d\omega_h(\gamma).\nonumber
\end{align}
Next we choose 
\begin{align*}
\Phi(t,x)=\chi_\delta(t)\varphi(x),\quad \chi_\delta(t)=\frac 1\delta \chi\left(\frac{t-t_0}{\delta}\right),
\end{align*}
where $\chi$ is a smooth cut-off function with $0\leq\chi(t)\leq \mathbf 1_{|t|\leq 1}$ and $\int\chi =1$, and $\varphi$ is any smooth compactly supported non-negative function such that $\varphi(x_0)=1$. Using the Lebesgue point properties of $t_0$ we may pass to the limit $\delta\to 0$ in the left-hand side of \eqref{eq:estimflux} and obtain
\begin{align*}
&q(u_0) - x'(t_0) \eta(u_0)  \\
&\leq \|\eta''\|_\infty  \limsup_{\delta\to 0}
\frac{1}{\delta} \int_\Gamma |D\gamma_v|(\lbrace t\in (t_0-\delta,t_0+\delta)\colon \gamma_x(t)=x(t)\rbrace) \, d\omega_h(\gamma).
\end{align*}
Since $x$ is $S$-Lipschitz and $x(t_0)=x_0$, using \eqref{eq:nugamma} to further estimate the right-hand side we infer
\begin{align*}
q(u_0) - x'(t_0) \eta(u_0)  \leq \|\eta''\|_\infty  \limsup_{\delta\to 0}
\frac{1}{\delta} \nu(B_{C \delta}(t_0,x_0)),
\end{align*}
where $C=\sqrt{1+S^2}$.
Recalling \eqref{eq:nodissip} we deduce
\begin{equation*}
q(u_0) - x'(t_0) \eta(u_0) \leq 0,
\end{equation*}
for any nondecreasing $C^2$ entropy $\eta$ with $\eta(0)=0$, and associated entropy flux $q$ with $q(0)=0$. Approximating by $C^2$ functions, this is valid for any nondecreasing $\eta$ with $\eta(0)=0$.  Choosing
\[
\eta (x) = (x-a) 1_{x \geq a}, q(x) = (f(x)-f(a)) 1_{x \geq a}
\]
for any $a\in [0,u_0)$, we deduce
\begin{align*}
x'(t_0)\geq \frac{f(u_0)-f(a)}{u_0-a},
\end{align*}
and letting $a\to u_0$ this implies $x'(t_0)\geq f'(u_0)$.

Using curves of the epigraph (see Remark~\ref{r:epiflux}) and the property \eqref{right} that typical curves of the epigraph cannot cross $x(t)$ from the right, we similarly obtain that
\begin{align*}
q(u_0) - x'(t_0) \eta(u_0) \geq 0
\end{align*}
for all nonincreasing entropy $\eta$ with $\eta(1)=0$, and associated entropy flux $q$ with $q(1)=0$, and applying this to
\[
\eta = (a-x) 1_{x \leq a}, q(x) = (f(a) - f(x)) 1_{x \leq a},
\]
for any $a\in (u_0,1]$ we deduce the opposite inequality $x'(t_0)\leq f'(u_0)$.
\end{proof}

\section{Proof of the Flux Formula}\label{s:flux}

In this section we prove Theorem~\ref{fluxformula}.  As in its statement, we fix $u$ a finite-entropy solution of \eqref{eq:scl} with Lagrangian representation $\omega_h$ of its hypograph, $\Sigma \subset (0,T)\times \R^d$ a Lipschitz hypersurface, and
an open set $U\subset (0,T)\times\R^d$ such that $U\setminus\Sigma$ has two connected components $\Sigma^\pm$.  We denote by $\nu$ the unit normal vector to $\Sigma$ pointing from $\Sigma^-$ to $\Sigma^+$ and by $u^\pm$ the traces of $u$ on the corresponding sides.

We start by establishing a first decomposition formula for the flux, where the flux along a curve $\gamma$ is not yet in the geometrically meaningful form of $F_\gamma^-$ in Theorem~\ref{fluxformula}.

\begin{lemma} \label{firstformula}
For any entropy-entropy flux pair $(\eta,q)$ with $\eta(0)=0$, $q(0)=0$ we have
\begin{equation}\label{eq:firstformula}
\int_\Sigma (\eta(u^-) , q(u^-))\cdot \nu\,\Phi(t,x)\, d\mathcal H^d(t,x) = \int_\Gamma \langle \widetilde{F}_\gamma,\eta\otimes\Phi\rangle d\omega_h(\gamma)\qquad\forall \Phi\in C_c^\infty(U),
\end{equation}
where
\begin{align*}
\langle \widetilde{F}_\gamma ,\eta \otimes \Phi \rangle & =  - \int_0^T \mathbf 1_{(t,\gamma_x(t))\in \overline{\Sigma^+}}\Phi(t,\gamma_x(t)) D(\eta'\circ \gamma_v)(dt) \nonumber \\
& \quad -  \int_0^T \mathbf 1_{(t,\gamma_x(t))\in \overline{\Sigma^+}} \eta'(\gamma_v(t))(\partial_t\Phi+f'(\gamma_v(t))\cdot\nabla_x\Phi)(t,\gamma_x(t)) dt.
\end{align*}
\end{lemma}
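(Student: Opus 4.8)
The plan is to derive the first decomposition formula \eqref{eq:firstformula} directly from the defining property \eqref{eq:etomega} of the Lagrangian representation, treating the entropy flux across $\Sigma$ as the distributional boundary term that arises when one integrates by parts against the indicator of $\overline{\Sigma^+}$. First I would start from the weak formulation of the entropy production on $U$: for an entropy-flux pair $(\eta,q)$ with $\eta(0)=q(0)=0$, the distribution $\mu_\eta = \partial_t\eta(u)+\nabla_x\cdot q(u)$ tested against $\mathbf 1_{(t,x)\in\overline{\Sigma^+}}\,\Phi(t,x)$ produces, on one hand, a bulk term involving $\eta(u)$ and $q(u)$ integrated over $\Sigma^+$, and on the other hand a surface term supported on $\Sigma$ whose integrand is exactly $\nu\cdot(\eta(u^-),q(u^-))\,\Phi$, by the strong trace property and the definition of $\nu$ as the normal pointing from $\Sigma^-$ to $\Sigma^+$. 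This identifies the left-hand side of \eqref{eq:firstformula} as the surface contribution.

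Next I would rewrite the bulk contribution using the Lagrangian representation. The key is to pass from the Eulerian quantities $\eta(u)$, $q(u)$ to integrals along the curves $\gamma$. Using \eqref{eq:etomega}, namely that $(e_t)\sharp\omega_h$ equals Lebesgue measure on the hypograph $\{v<u(t,x)\}$, and recalling $\eta(0)=0$, $q(0)=0$, one can express $\eta(u(t,x))$ and $q(u(t,x))$ as $\omega_h$-averages of $\eta'(\gamma_v(t))$ and $q'(\gamma_v(t))$ over curves passing through $(t,x)$ at value below $u$. Concretely, $\eta(u(t,x)) = \int_0^{u(t,x)}\eta'(v)\,dv$, and this vertical integral is encoded by the measure $\omega_h$ through the layer of curves with $\gamma_v(t)=v$. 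Combining this representation with the characteristic equation \eqref{eq:characlag} $\gamma_x'(t)=f'(\gamma_v(t))$, the transport operator $\partial_t + f'(\gamma_v)\cdot\nabla_x$ appearing in $\langle\widetilde F_\gamma,\eta\otimes\Phi\rangle$ is precisely the derivative of $\Phi$ along the curve, so the second line of $\widetilde F_\gamma$ accounts for the bulk divergence term, while the first line (the integral against $D(\eta'\circ\gamma_v)$) accounts for the production $\mu_\eta$ itself being distributed along the curves via the jumps and variation of $\gamma_v$.

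Carrying this out, I would integrate by parts \emph{along each curve}: for fixed $\gamma$, the function $t\mapsto \mathbf 1_{(t,\gamma_x(t))\in\overline{\Sigma^+}}\,\Phi(t,\gamma_x(t))\,\eta'(\gamma_v(t))$ is $BV$ in $t$ (since $\gamma_v\in BV$, $\gamma_x$ is Lipschitz, and $\Phi$ is smooth), and its total derivative splits into the three pieces: the derivative hitting $\eta'\circ\gamma_v$ gives the $D(\eta'\circ\gamma_v)$ term, the derivative hitting $\Phi(t,\gamma_x(t))$ gives the transport term $(\partial_t\Phi+f'(\gamma_v)\cdot\nabla_x\Phi)$ by \eqref{eq:characlag}, and the derivative hitting the indicator $\mathbf 1_{\overline{\Sigma^+}}$ produces the crossing/bounce contributions on $\Sigma$ — which is what later gets reorganized into the geometric form $F_\gamma^-$. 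After integrating this curve-wise identity against $\omega_h$ and invoking \eqref{eq:etomega} to recognize the bulk $\omega_h$-averages as the Eulerian integrals of $\mu_\eta$ over $\Sigma^+$, the surface term is isolated and equals the left-hand side of \eqref{eq:firstformula}, establishing the lemma.

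I expect the main obstacle to be the rigorous justification of the two measure-theoretic manipulations that have to be performed simultaneously: the curve-wise integration by parts requires that $t\mapsto\mathbf 1_{(t,\gamma_x(t))\in\overline{\Sigma^+}}$ have well-controlled variation for $\omega_h$-a.e.\ $\gamma$ (so that the boundary terms on $\Sigma$ are finite and the $BV$ product rule applies), while the passage from Eulerian to Lagrangian integrals via \eqref{eq:etomega} must be carried out jointly in $(t,x,v)$ with a Fubini-type argument that commutes the $t$-integration, the vertical $v$-layering, and the $\omega_h$-integration. Controlling the indicator's variation is delicate because $\Sigma$ is merely Lipschitz, so I would handle it by first proving the statement for $\Phi$ supported where crossings are finite — relying on the finite-intersection assertion of Theorem~\ref{fluxformula}, or on a preliminary transversality estimate — and then arguing that the identity, being linear in $\Phi$, extends by the density and the integrability \eqref{eq:totvarlag} of $\mathrm{TotVar}\,\gamma_v$.
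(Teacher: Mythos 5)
Your Eulerian-to-Lagrangian translation is exactly the paper's: write $\eta(u)=\int \mathbf 1_{v<u}\eta'(v)\,dv$, $q(u)=\int \mathbf 1_{v<u}\eta'(v)f'(v)\,dv$, use \eqref{eq:etomega} to turn space-time integrals into $\omega_h$-integrals, use \eqref{eq:characlag} to recognize $\partial_t\Phi+f'(\gamma_v)\cdot\nabla_x\Phi$ as the derivative of $\Phi$ along the curve, and integrate by parts in $t$. The gaps are in how you handle the surface. You test $\mu_\eta$ against the sharp indicator $\mathbf 1_{\overline{\Sigma^+}}\Phi$ and assert that the boundary term is $\nu\cdot(\eta(u^-),q(u^-))\,\Phi$ ``by the strong trace property''. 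But this pairing is precisely the delicate content of the lemma, not a routine step: which trace appears depends on whether the part of $\mu_\eta$ concentrated on $\Sigma$ itself is attributed to the bulk term, i.e.\ on testing against the closed set $\overline{\Sigma^+}$ versus the open set $\Sigma^+$. The two choices differ by $\int_\Sigma \nu\cdot\left[(\eta(u^+),q(u^+))-(\eta(u^-),q(u^-))\right]\Phi\, d\mathcal H^d$, and only the closed choice produces $u^-$. The paper's proof exists exactly to justify this: it replaces the indicator by a Lipschitz function $G_\delta$ whose transition layer lies on the $\Sigma^-$ side ($G_\delta=1$ on $\Sigma^+$, $G_\delta=0$ at distance $\geq\delta$ inside $\Sigma^-$, $\nabla G_\delta\to \nu\otimes\mathcal H^d_{\lfloor \Sigma\cap U}$); the one-sidedness of the layer yields the $u^-$ trace while $G_\delta\to\mathbf 1_{\overline{\Sigma^+}}$ pointwise. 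Your proposal needs either this mollification or a Gauss--Green theorem for divergence-measure fields on closed sets, and supplies neither.

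The second problem is that the ``main obstacle'' you single out --- BV control of $\theta_\gamma(t)=\mathbf 1_{(t,\gamma_x(t))\in\overline{\Sigma^+}}$ --- is not an obstacle for this lemma at all, and your fix for it would fail. In $\widetilde F_\gamma$ the indicator appears only as a bounded multiplier; it is never differentiated. In the paper all integrations by parts are carried out against the smooth $G_\delta$, and then $\delta\to 0$ by dominated convergence using \eqref{eq:totvarlag}, so finiteness of intersections plays no role in Lemma~\ref{firstformula}; the curve-wise product rule with $\theta_\gamma$ is the content of the \emph{next} step (Lemma~\ref{intermediateformula}), where Proposition~\ref{p:finiteinter} is legitimately invoked curve by curve. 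Your workaround --- prove the identity first ``for $\Phi$ supported where crossings are finite'', then extend by density in $\Phi$ --- is not meaningful: finite intersection is a property of $\omega_h$-a.e.\ \emph{curve}, not of a region of $U$, so there is no such class of test functions; and density together with the bound \eqref{eq:totvarlag} on $\mathrm{TotVar}\,\gamma_v$ gives no control whatsoever on the measure $D\theta_\gamma$ that your curve-wise integration by parts introduces. If you delete that entire step (it proves nothing needed here) and rigorously establish the closed-set/minus-trace Gauss--Green identity, the remaining skeleton of your argument does yield the lemma, essentially reproducing the paper's proof in a different order.
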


\begin{proof}
For small enough $\delta>0$ we may find a $C/\delta$-Lipschitz function $G_\delta\colon U\to [0,1]$, with $C>0$ independent of $\delta$, such that
\begin{align*}
&G_\delta=1\text{ on }\Sigma^+,\quad G_\delta=0 \text{ on }\left\{(t,x) \in \Sigma^- : \dist((t,x), \Sigma^+)\ge \delta \right\}\\
\text{and }&\nabla G_\delta \to \nu \otimes \mathcal H^d_{\lfloor \Sigma\cap U}\text{ as }\delta\to 0.
\end{align*}
Then we have
\begin{align*}
&\int_\Sigma (\eta(u^-) , q(u^-))\cdot \nu\,\Phi(t,x)\,
 d\mathcal H^d(t,x)\\
&=\lim_{\delta\to 0}\int (\eta(u),q(u))\cdot \nabla G_\delta(t,x)  \,\Phi(t,x)\, dtdx
\end{align*}
Since $\eta(0)=0$ and $q(0)=0$ we may write $\eta(u)=\int \mathbf 1_{v<u}\eta'(v)\, dv$ and $q(u)=\int \mathbf 1_{v<u}\eta'(v)f'(v)\, dv$. Combining this with  property \eqref{eq:etomega} of the Lagrangian representation, $(e_t)\sharp\omega_h=\mathbf 1_{v<u(t,x)}\, dx\, dv$, we find
\begin{align*}
&\int (\eta(u),q(u))\cdot \nabla G_\delta(t,x)  \,\Phi(t,x)\, dtdx\\
&=
\int_0^T (1,f'(\gamma_v(t)))\cdot \nabla G_\delta(t,\gamma_x(t)) \, \eta'(\gamma_v(t))\Phi(t,\gamma_x(t))\, dt\, d\omega_h(\gamma)\\
&= \int_{\Gamma} \int_0^T \frac{d}{dt}\left[G_\delta(t,\gamma_x(t))\right] \eta'(\gamma_v(t))\Phi(t,\gamma_x(t))\, dt\, d\omega_h(\gamma)\\
&=- \int_{\Gamma}\Bigg[ \int_0^T G_\delta(t,\gamma_x(t))\Phi(t,\gamma_x(t))\, D(\eta'\circ \gamma_v)(dt) \\
&\hspace{3em} +\int_0^T G_\delta(t,\gamma_x(t))\eta'(\gamma_v(t))(\partial_t\Phi+f'(\gamma_v(t))\cdot\nabla_x\Phi)(t,\gamma_x(t))\, dt \Bigg]\, d\omega_h(\gamma).
\end{align*}
For the second equality we used the fact that $\omega_h$ is concentrated on curves satisfying the characteristic equation \eqref{eq:characlag}.
Note that $G_\delta (t,x)\to \mathbf 1_{(t,x)\in\overline{\Sigma^+}}$ for all $(t,x)\in U$ as $\delta\to 0$, and that the Lagrangian representation satisfies
$\int_\Gamma (1+\mathrm{TotVar}\:\gamma_v\, d\omega_h(\gamma)<\infty$ \eqref{eq:totvarlag}. Hence by dominated convergence we deduce 
\begin{align*}
\lim_{\delta\to 0}\int (\eta(u),q(u))\cdot \nabla G_\delta(t,x)  \,\Phi(t,x)\, dtdx
= \int_\Gamma \langle \widetilde{F}_\gamma,\eta\otimes\Phi\rangle d\omega_h(\gamma),
\end{align*} 
which concludes the proof of \eqref{eq:firstformula}.
\end{proof}

Next we check that $\widetilde F_\gamma =F_\gamma^-$ provided $\gamma$ intersects $\Sigma$ at most a finite number of times.

\begin{lemma} \label{intermediateformula}
If $\gamma\in\Gamma$ intersects $\Sigma$ at most a finite number of times, that is,
\begin{align*}
N_\Sigma(\gamma)=\mathrm{card}\: \left\lbrace t\in (0,T)\colon (t,\gamma_x(t))\in\Sigma\right\rbrace <\infty,
\end{align*}
 then
\begin{align*}
\langle \widetilde{F}_\gamma, \eta \otimes \Phi \rangle = \langle F_\gamma, \eta \otimes \Phi \rangle \qquad\forall \eta\in C^1(\R),\Phi\in C_c^\infty(U),
\end{align*}
where $\widetilde F_\gamma$ and $F_\gamma^-$ are defined in Lemma~\ref{firstformula} and Theorem~\ref{fluxformula}.
\end{lemma}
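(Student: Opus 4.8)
The plan is to collapse the two continuous integrals defining $\widetilde F_\gamma$ into the finite sum $F^-_\gamma$, using that the finite-intersection hypothesis forces the indicator $g(t):=\mathbf 1_{(t,\gamma_x(t))\in\overline{\Sigma^+}}$ to be a step function. Fix $\gamma$ with $N_\Sigma(\gamma)<\infty$ and write $\psi(t):=\eta'(\gamma_v(t))$ and $\phi(t):=\Phi(t,\gamma_x(t))$. Since the fixed $\gamma$ satisfies the characteristic equation \eqref{eq:characlag}, the chain rule gives $\frac{d}{dt}\phi(t)=(\partial_t\Phi+f'(\gamma_v(t))\cdot\nabla_x\Phi)(t,\gamma_x(t))$, so that $\phi$ is Lipschitz and the second integrand of $\widetilde F_\gamma$ equals $g\,\psi\,\phi'$. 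First I would merge the two terms: since $\phi$ is Lipschitz (hence continuous) and $\psi=\eta'\circ\gamma_v\in BV(0,T)$, the product $w:=\psi\phi$ lies in $BV(0,T)$ and obeys the Leibniz rule $Dw=\phi\,D\psi+\psi\phi'\,\mathcal L^1$, with no jump–jump correction because $\phi$ is continuous. Hence
\[
\langle\widetilde F_\gamma,\eta\otimes\Phi\rangle=-\int_0^T g\,\phi\,D\psi-\int_0^T g\,\psi\,\phi'\,dt=-\int_0^T g\,dw,
\]
where $w\in BV(0,T)$ is compactly supported with $w(t^\pm)=0$ at the endpoints of $\mathrm{supp}\,\phi$, since $\phi$ is continuous and vanishes there.

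The key structural step invokes the hypothesis. On the compact set $\mathrm{supp}\,\phi$ the curve $t\mapsto(t,\gamma_x(t))$ stays inside $\mathrm{supp}\,\Phi\subset U$ and meets $\Sigma$ at only finitely many times $t_1<\dots<t_N$; between consecutive $t_i$ it lies in a single open component $\Sigma^\pm$, so $g$ is constant there and jumps only at the $t_i$. Moreover $(t_i,\gamma_x(t_i))\in\Sigma\cap U\subset\overline{\Sigma^+}$ forces the pointwise value $g(t_i)=1$. Decomposing $\int_0^T g\,dw$ into its contributions on the open intervals of constancy of $g$ and its atoms at the $t_i$ (weighted by $g(t_i)=1$), and using that $w$ vanishes at the endpoints of its support, collecting the coefficients of $w(t_i^\pm)$ gives
\[
-\int_0^T g\,dw=\sum_{i=1}^N\Big[(1-g(t_i^-))\,w(t_i^-)-(1-g(t_i^+))\,w(t_i^+)\Big].
\]

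It then remains to identify this with $\langle F^-_\gamma,\eta\otimes\Phi\rangle$. As $\phi$ is continuous, $w(t_i^\pm)=\eta'(\gamma_v(t_i^\pm))\,\Phi(t_i,\gamma_x(t_i))$, and I would evaluate the bracket according to the type of intersection at $t_i$, reading off $(g(t_i^-),g(t_i^+))$: for $t_i\in I^+_\gamma$ one has $(0,1)$, giving $+\eta'(\gamma_v(t_i^-))\Phi$; for $t_i\in I^-_\gamma$, $(1,0)$, giving $-\eta'(\gamma_v(t_i^+))\Phi$; for $t_i\in B^-_\gamma$, $(0,0)$, giving $(\eta'(\gamma_v(t_i^-))-\eta'(\gamma_v(t_i^+)))\Phi$; and a bounce from $\Sigma^+$, $(1,1)$, contributes $0$. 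Summing reproduces $F^-_\gamma$ exactly. The main obstacle is the bookkeeping at the atoms of $D\psi$—especially when $\gamma_v$ jumps precisely at an intersection time—where the pointwise value $g(t_i)=1$ (which is where $\Sigma\subset\overline{\Sigma^+}$ enters) is exactly what makes the formula asymmetric, so that bounces from $\Sigma^-$ contribute while bounces from $\Sigma^+$ do not; the finite-intersection hypothesis is precisely what lets us treat $g$ as a step function with finitely many jumps. A minor regularity caveat is that $\widetilde F_\gamma$ requires $\psi=\eta'\circ\gamma_v\in BV$, which holds when $\eta'$ is Lipschitz (e.g. $C^2$ entropies); the identity for general $C^1$ entropies then follows because $F^-_\gamma$ depends only on the finitely many traces $\gamma_v(t_i^\pm)$.
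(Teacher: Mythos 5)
Your proof is correct and takes essentially the same route as the paper's: both merge the two integrals in $\widetilde F_\gamma$ into a single Stieltjes integral $-\int_0^T \theta\, D\big(\eta'(\gamma_v)\,\Phi(\cdot,\gamma_x)\big)$ (implicitly or explicitly using the characteristic equation \eqref{eq:characlag}), and then evaluate it via the finitely many jumps of the indicator $\theta=\mathbf 1_{(t,\gamma_x(t))\in\overline{\Sigma^+}}$, whose pointwise value $1$ at intersection times is exactly what produces the asymmetric bounce terms. The only differences are organizational (the paper telescopes via the $BV$ product rule for $D(\theta\psi)$, while you partition into intervals of constancy of $\theta$), and your explicit remarks on the characteristic ODE and on $C^1$ versus $C^2$ entropies address points the paper leaves implicit.
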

\begin{proof}
Setting
\begin{align}\label{eq:thetapsi}
\theta(t)=\mathbf 1_{(t,\gamma_x(t))\in \overline{\Sigma^+}},\qquad 
\psi(t)=\eta'(\gamma_v(t))\Phi(t,\gamma_x(t)),
\end{align}
we rewrite $\widetilde F_\gamma$ as
\begin{align}
\label{eq:Fthetapsi}
\langle \tilde{F}_\gamma, \eta \otimes \Phi \rangle   = -\int_0^T \theta(t) D\psi(dt). 
\end{align}
Since $\gamma$ has a finite number of intersections with $\Sigma$, we know that the $\lbrace 0,1\rbrace$-valued function $\theta$ is  $BV$. Its jump set can be decomposed as
\begin{align*}
J_\theta =I_\gamma^+ \cup I_\gamma^-,
\end{align*}
where $I_\gamma^\pm$ are as in Theorem~\ref{fluxformula} the sets of intersection times where $\gamma$ crosses $\Sigma$ from $\Sigma^\mp$ to $\Sigma^\pm$. They correspond to positive and negative jumps of $\theta$. Note that $\theta(t)=1$ for all $t\in J_\theta$. Moreover, the set $B_\gamma^-$ of intersection times where $\gamma$ bounces on $\Sigma$ from $\Sigma^-$ corresponds to the non-jump points of $\theta$ at which its pointwise value is different from its left and right limits: for $t\in B_\gamma^-$ we have $\theta(t^+)= \theta(t^-)=0$ but $\theta(t)=1$. 

The function $\psi$ is also $BV$, and its jump set is included in $J_\gamma$, the jump set of $\gamma$. We use the product rule in $BV$ for $D(\theta\psi)$ in order to rewrite \eqref{eq:Fthetapsi}. 
As in \cite{AFP} we denote by $\tilde D\theta$ the sum of the absolutely continuous and Cantor parts of the measure $D\theta$.
Since $\widetilde D\theta=0$ and $\psi$ vanishes at the boundary of $(0,T)$, we obtain
\begin{align*}
&\langle \tilde{F}_\gamma, \eta \otimes \Phi \rangle 
\\
&=
 -\int_0^T \theta(t) \tilde D\psi(dt) -\sum_{t\in J_\gamma} \theta(t) (\psi(t^+)-\psi(t^-)) \nonumber \\
&=\sum_{t\in J_\theta\cup J_\gamma}   (\theta(t^+)\psi(t^+)-\theta(t^-)\psi(t^-)) -\sum_{t\in J_\gamma} \theta(t) (\psi(t^+)-\psi(t^-))\\
&=\sum_{t\in J_\theta\setminus J_\gamma} \psi(t)   (\theta(t^+)-\theta(t^-)) 
+ \sum_{t\in J_\gamma\cap J_\theta}   (\theta(t^+)\psi(t^+)-\theta(t^-)\psi(t^-)) \\
& \quad -\sum_{t\in J_\gamma\cap J_\theta} (\psi(t^+)-\psi(t^-))
-\sum_{t\in J_\gamma\cap B_\gamma^-} (\psi(t^+)-\psi(t^-))\\
&= \sum_{t\in  I_\gamma^+ }   \psi(t^-) - \sum_{t\in I_\gamma^-}   \psi(t^+)
 +\sum_{t\in B_\gamma^-} (\psi(t^-)-\psi(t^+)),
\end{align*}
and recalling the definitions of $\theta,\psi$ \eqref{eq:thetapsi} this corresponds exactly to $\langle F_\gamma, \eta \otimes \Phi \rangle$.
\end{proof}

Now Theorem~\ref{fluxformula} follows directly from Lemmas~\ref{firstformula} and \ref{intermediateformula}, provided we show that $\omega_h$-a.e. $\gamma$ intersects $\Sigma$ a finite number of times:

\begin{proposition}\label{p:finiteinter}
Let $\Sigma\subset (0,T)\times \R^d$ be a Lipschitz hypersurface. Then
\begin{align*}
\omega_h\left(\left\lbrace \gamma\in\Gamma\colon N_\Sigma(\gamma)=\infty\right\rbrace\right)=0.
\end{align*}
\end{proposition}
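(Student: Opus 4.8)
The plan is to reduce the finiteness of the intersection set $T_\Sigma(\gamma):=\{t\in(0,T):(t,\gamma_x(t))\in\Sigma\}$ to the statement that, for $\omega_h$-a.e.\ $\gamma$, the curve never touches $\Sigma$ \emph{tangentially}, i.e.\ at no intersection time is the spacetime velocity $(1,f'(\gamma_v(t)))$ tangent to $\Sigma$. The basic object is the occupation measure on the extended phase space $(0,T)\times\R^d\times[0,1]$,
\[
\lambda:=\int_\Gamma\Big(\int_0^T\delta_{(t,\gamma_x(t),\gamma_v(t))}\,dt\Big)\,d\omega_h(\gamma)=\int_0^T(e_t)\sharp\omega_h\,dt=\mathcal{L}^{d+2}\lfloor\{(t,x,v):v<u(t,x)\},
\]
where the last two equalities use the Lagrangian identity \eqref{eq:etomega} together with the continuity of $\gamma_x$ and $\gamma_v(t)=\gamma_v(t^+)$ for a.e.\ $t$; in particular $\lambda\ll\mathcal{L}^{d+2}$. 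Since $\gamma_x$ is continuous on $[0,T]$, each curve meets $\Sigma$ only inside a bounded region, so writing $\Sigma=\bigcup_R(\Sigma\cap B_R)$ and intersecting the resulting null sets over $R\in\N$, I may assume $\Sigma$ has finite $\mathcal{H}^d$-measure and a well-defined measure-theoretic normal $\nu$ defined $\mathcal{H}^d$-a.e.

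First I would carry out the reduction. Fixing a local signed Lipschitz defining function $\Phi$ for $\Sigma$ (say a signed distance, positive on $\Sigma^+$), the composition $\Theta_\gamma(t):=\Phi(t,\gamma_x(t))$ is Lipschitz and $T_\Sigma(\gamma)=\{\Theta_\gamma=0\}$. A zero of a Lipschitz function at which the sign changes can accumulate only at another zero $t_*$ where either $\Theta_\gamma$ is differentiable with $\Theta_\gamma'(t_*)=0$ --- precisely a \emph{tangential} contact, $(1,f'(\gamma_v(t_*)))\cdot\nu=0$ --- or where $\Theta_\gamma$ is not differentiable, which (away from the singular set of the Lipschitz surface) forces a jump of $\gamma_x'=f'(\gamma_v)$, hence a jump of $\gamma_v$. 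Thus, once tangential contacts are excluded, $T_\Sigma(\gamma)$ can accumulate only at jump points of $\gamma_v$; since $\gamma_v\in BV$ has only countably many jumps and the total variation is controlled by \eqref{eq:totvarlag}, I expect to rule out accumulation there as well, using \eqref{eq:nugamma} to bound in the mean how often a curve can return to $\Sigma$ through jumps. This would leave $T_\Sigma(\gamma)$ with no accumulation point in the compact interval $[0,T]$, hence finite.

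The main obstacle is therefore to prove that $\omega_h$-a.e.\ lifted curve $t\mapsto(t,\gamma_x(t),\gamma_v(t))$ avoids the \emph{tangency locus}
\[
\mathcal{T}:=\big\{(t,x,v):(t,x)\in\Sigma,\ \nu(t,x)\cdot(1,f'(v))=0\big\}.
\]
Heuristically this holds because $\mathcal{T}$ has codimension two: lying on $\Sigma$ costs one codimension and the scalar tangency relation costs another, so a one-dimensional lifted curve meets $\mathcal{T}$ only non-generically. This is the precise content of the slogan that \enquote{$\Sigma$ is of codimension one while $(e_t)\sharp\omega_h$ is absolutely continuous}. The genuine difficulty is that absolute continuity of $\lambda$ by itself is useless here: \emph{every} intersection with $\Sigma$ already has zero $\lambda$-measure, since $\mathcal{T}\subset\Sigma\times[0,1]$ is $\mathcal{L}^{d+2}$-null, so $\lambda$ can see neither tangential versus transverse contacts nor isolated touches at all.

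To overcome this I would work slicewise in time rather than with the full occupation measure. For a.e.\ fixed $t$ the slice $\Sigma_t=\{x:(t,x)\in\Sigma\}$ is $\mathcal{L}^d$-null and the tangency condition cuts the fibre $\mathcal{T}_t=\{(x,v):(t,x,v)\in\mathcal{T}\}$ down further; using the absolute continuity of the \emph{fixed-time} marginal $(e_t)\sharp\omega_h\ll\mathcal{L}^{d+1}$ from \eqref{eq:etomega}, this should bound the $\omega_h$-measure of curves that are tangent to $\Sigma$ at that time. The crux is then to upgrade these instantaneous bounds into the existence of \emph{no} tangency time along a typical curve: I would cover a neighborhood of $\mathcal{T}$ by thin spacetime slabs adapted to $\Sigma$, estimate the $\omega_h$-mass entering each slab from the slice bounds, and control the number of distinct tangential approaches by the oscillation of the normal velocity $t\mapsto f'(\gamma_v(t))\cdot\nu$, which is encoded in $|D\gamma_v|$ and hence, through \eqref{eq:nugamma}, in the dissipation measure $\nu$. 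Making this covering estimate quantitative, so that the total contribution is finite and the tangency set of curves is $\omega_h$-negligible, is the heart of the argument and the step I expect to require the most care.
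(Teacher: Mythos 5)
You have correctly identified the skeleton of the proof --- reduce finiteness of $N_\Sigma(\gamma)$ to the absence of tangential contacts, and rule those out using the absolute continuity of the \emph{fixed-time} marginals $(e_t)\sharp\omega_h\leq\mathcal L^{d+1}$ rather than of the occupation measure --- but your proposal stops exactly where the real work begins. The step you defer (``making this covering estimate quantitative \dots the heart of the argument'') is the entire content of the proposition; everything before it is a routine reduction, and this is essentially the route the paper takes. What is needed, and what the paper supplies, is the following scaling argument: if $\gamma$ meets the Lipschitz graph $\Sigma=\{x_d=h(t,\hat x)\}$ tangentially from the left at time $t_0$, then for every $\e>0$ there is $\delta>0$ such that $\dist\bigl((t,\gamma_x(t)),\Sigma\cap(\{t\}\times B_{R+ST})\bigr)\leq\e\,(t_0-t)$ for all $t\in(t_0-\delta,t_0)$; hence, sampling at the $N=\lfloor T/\delta\rfloor$ times $k/N$, every such curve lies at one of these times in the $\e\delta$-neighbourhood of the slice $\Sigma\cap(\{k/N\}\times B_{R+ST})$, whose $\mathcal L^{d}$-measure is at most $c(L+1)\e\delta\,(R+ST)^{d-1}$. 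Summing over $k$ and using $(e_{k/N})\sharp\omega_h\leq\mathcal L^{d+1}$ gives $\omega_h(X_{R,\e}^\delta)\leq c(L+1)(R+ST)^{d-1}T\,\e$ \emph{uniformly in} $\delta$, and letting $\e\to0$ kills the one-sided tangency set. The point is that the cone condition makes the slab thickness proportional to $\e\delta$, so the $N\sim T/\delta$ slabs sum to $O(\e)$ independently of $\delta$; without this scaling, covering ``a neighborhood of the tangency locus $\mathcal T$'' by thin slabs gives nothing, since any fixed neighbourhood has positive measure and the number of slabs blows up as they get thinner.

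Two further points in your reduction need repair. First, the detour through jump points of $\gamma_v$ is both unnecessary and not fixable by \eqref{eq:nugamma}: since $\gamma_v\in BV$ has one-sided limits everywhere, $\gamma_x(t)=\gamma_x(0)+\int_0^t f'(\gamma_v(s))\,ds$ has one-sided derivatives $f'(\gamma_v(t^\pm))$ at \emph{every} $t$, and accumulation of intersection times from the left (resp.\ right) at $t_*$ forces the one-sided velocity $(1,f'(\gamma_v(t_*^-)))$ (resp.\ $(1,f'(\gamma_v(t_*^+)))$) to lie in the tangent plane, whether or not $\gamma_v$ jumps at $t_*$. So the correct objects are the two one-sided tangency sets (the paper's $X^-$ and $X^+$), and jumps never enter; conversely, a total-variation bound such as \eqref{eq:nugamma} cannot by itself exclude infinitely many crossings clustering at a single jump time --- what excludes them is precisely one-sided tangency. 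Second, you take a normal $\nu$ defined $\mathcal H^d$-a.e.\ on $\Sigma$, but discreteness of the intersection set requires that a.e.\ curve \emph{never touches} the $\mathcal H^d$-null set where the tangent plane fails to exist; this is not automatic and needs its own covering argument (the paper's Lemma~\ref{l:nullinter}, again based on $(e_t)\sharp\omega_h\leq\mathcal L^{d+1}$ and the $S$-Lipschitz bound on $\gamma_x$), which your proposal does not contain.
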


Proposition~\ref{p:finiteinter} follows from the following:

\begin{lemma}\label{l:graphfinite}
Let $h:[0,T] \times \R^{d-1}\to\R$ be Lipschitz and 
\[
\Sigma = \{ (t , \hat x , h(t , \hat x))\colon  (t,\hat x)\in [0,T]\times \R^{d-1} \},
\]
then $N_\Sigma(\gamma)<\infty$ for $\omega_h$-a.e. $\gamma\in\Gamma$.
\end{lemma}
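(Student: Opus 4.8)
The plan is to reduce the statement to a question about the zeros of a single scalar Lipschitz function, and then to forbid accumulation of zeros by a quantitative transversality estimate coming from the absolute continuity \eqref{eq:etomega} of the pushforward. Writing a point of $\R^d$ as $x=(\hat x,x_d)$ and denoting by $\gamma_x^d$ and $\hat\gamma_x$ the last and first $d-1$ components of $\gamma_x$, the condition $(t,\gamma_x(t))\in\Sigma$ reads $g_\gamma(t):=\gamma_x^d(t)-h(t,\hat\gamma_x(t))=0$. Since $\gamma_x$ is $S$-Lipschitz for $\omega_h$-a.e.\ $\gamma$ and $h$ is Lipschitz, $g_\gamma$ is Lipschitz on $[0,T]$ with a constant $L$ bounded independently of $\gamma$, and $N_\Sigma(\gamma)$ is exactly the number of zeros of $g_\gamma$. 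A covering argument lets me bound the intersections inside an arbitrary compact box, so I may localize in $x$ to a compact set $K$.

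The key is a tube estimate: for every $\rho>0$,
\begin{equation*}
\int_\Gamma \mathcal L^1\left(\{t\in(0,T):\dist((t,\gamma_x(t)),\Sigma)<\rho,\ \gamma_x(t)\in K\}\right)\, d\omega_h(\gamma)\le C_K\,\rho.
\end{equation*}
To prove it I would apply Fubini in $(t,\gamma)$ and then use \eqref{eq:etomega}, noting that the indicator $\mathbf 1_{\dist((t,\gamma_x(t)),\Sigma)<\rho}$ depends only on $\gamma_x(t)$, to rewrite the inner integral as $\int_{\R^d\times[0,1]}\mathbf 1_{\dist((t,x),\Sigma)<\rho}\,\mathbf 1_{v<u(t,x)}\,dx\,dv$. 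Bounding $\int_0^1\mathbf 1_{v<u}\,dv\le 1$, the left-hand side is at most $\int_0^T\mathcal L^d(\{x\in K:\dist((t,x),\Sigma)<\rho\})\,dt$. Because $\Sigma$ is a Lipschitz graph, its $\rho$-neighbourhood meets each time-slice in a set of thickness $O(\rho)$ in the $x_d$-direction, so each slice has $\mathcal L^d$-measure $\le C_K\rho$, giving the bound. Letting $\rho\to0$ already shows $\mathcal L^1(\{t:g_\gamma(t)=0\})=0$, so the closed zero set $Z_\gamma$ has empty interior, for $\omega_h$-a.e.\ $\gamma$.

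Finally I would argue by contradiction that $\omega_h(\{N_{\Sigma\cap K}=\infty\})=0$. If $Z_\gamma$ is infinite it has an accumulation point $t^\ast$, and since $Z_\gamma$ has empty interior there are infinitely many disjoint intervals $(a_i,b_i)\subset[0,T]\setminus Z_\gamma$ accumulating at $t^\ast$, with $g_\gamma(a_i)=g_\gamma(b_i)=0$ and $h_i:=\max_{[a_i,b_i]}|g_\gamma|>0$. As $g_\gamma$ is $L$-Lipschitz it spends time at least $\tfrac2L\min(\rho,h_i)$ with $|g_\gamma|<\rho$ on each interval, and $|g_\gamma|<\rho$ forces $\dist((t,\gamma_x(t)),\Sigma)<\rho$; summing disjoint contributions, the time spent in the $\rho$-tube near $t^\ast$ is at least $\tfrac2L\sum_i\min(\rho,h_i)$. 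Hence, setting $f_\rho(\gamma):=\tfrac1\rho\mathcal L^1(\{t:\dist((t,\gamma_x(t)),\Sigma)<\rho,\ \gamma_x(t)\in K\})$, monotone convergence yields $f_\rho(\gamma)\ge\tfrac2L\sum_i\min(1,h_i/\rho)\to+\infty$ on $\{N_{\Sigma\cap K}=\infty\}$, whereas Fatou's lemma together with the tube bound gives $\int_\Gamma\liminf_{\rho\to0}f_\rho\,d\omega_h\le\liminf_{\rho\to0}\int_\Gamma f_\rho\,d\omega_h\le C_K$. Therefore $\omega_h(\{N_{\Sigma\cap K}=\infty\})=0$, and a countable union over boxes concludes; Proposition~\ref{p:finiteinter} then follows by the reduction already recorded above.

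The main obstacle is this last step: extracting a quantitative contradiction from the merely Lipschitz (not $C^1$) function $g_\gamma$, for which oscillation cannot be excluded pointwise. The resolution is that any accumulation of zeros forces the curve to linger in the $\rho$-tube for a total time that is super-linear in $\rho$ (the divergence of $\sum_i\min(1,h_i/\rho)$), which is incompatible with the linear-in-$\rho$ tube bound produced by the codimension-one absolute continuity. Secondary points to handle carefully are the uniformity of the tube bound after localization and the case where the zeros accumulate at an endpoint of $[0,T]$.
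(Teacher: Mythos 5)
Your proof is correct, and it takes a genuinely different route from the paper's. The paper first discards the non-differentiable points of $\Sigma$ (an $\mathcal H^d$-null set, avoided by $\omega_h$-a.e.\ curve thanks to Lemma~\ref{l:nullinter}), and then shows that a.e.\ curve meets the remaining part of $\Sigma$ only \emph{transversally}: the curves admitting a tangential intersection are covered by the cone-sets $X_{R,\varepsilon}^{\delta}$ (curves staying $\varepsilon(t_0-t)$-close to $\Sigma$ on a backward time interval of length $\delta$), whose measure is bounded by $C\varepsilon$ via a time discretization and the pushforward property \eqref{eq:etomega}; transversality then makes the intersection set discrete, hence finite. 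You instead reduce to the zero set of the scalar Lipschitz function $g_\gamma(t)=\gamma_x^d(t)-h(t,\hat\gamma_x(t))$, prove an integrated occupation-time (tube) estimate $\int_\Gamma\mathcal L^1(\lbrace t:\dist((t,\gamma_x(t)),\Sigma)<\rho\rbrace)\,d\omega_h\le C_K\rho$ --- which rests on exactly the same two ingredients as the paper's estimate, namely $(e_t)\sharp\omega_h\le\mathcal L^{d+1}$ and the $O(\rho)$ thickness of each time-slice of the tube around a Lipschitz graph --- and then show that an accumulation of zeros forces the occupation time to be super-linear in $\rho$: each complementary interval of the zero set contributes at least $\tfrac2L\min(\rho,h_i)$, so $f_\rho\ge\tfrac2L\sum_i\min(1,h_i/\rho)\to\infty$, which Fatou's lemma forbids on a set of positive $\omega_h$-measure. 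The two arguments thus share the same core quantitative mechanism but deploy it differently, and each has its advantages: yours is more elementary and self-contained, needing neither Rademacher's theorem for $h$, nor the preliminary null-set Lemma~\ref{l:nullinter}, nor the one-sided derivatives $\gamma_x'(t_0^\pm)$ whose existence the paper's definition of the sets $X^\pm$ implicitly requires; the paper's argument, on the other hand, yields the stronger geometric conclusion that a.e.\ curve crosses $\Sigma$ transversally at points where a tangent plane exists, which sits naturally alongside the crossing/bouncing decomposition used in Theorem~\ref{fluxformula}. Your localization (to curves with $|\gamma_x(0)|\le R$, whose images stay in $B_{R+ST}$ by the $S$-Lipschitz bound) and your treatment of zeros accumulating at an endpoint of $[0,T]$ are both sound, so I see no gap.
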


\begin{proof}[Proof of Proposition~\ref{p:finiteinter}]
First, we know that $\Sigma$ is a locally finite union of Lipschitz graphs of the form $x_i=h(\hat x_i, t)$ for some $i\in\lbrace 1,\ldots,d\rbrace$, where $\hat x_i=(x_1,\ldots,x_{i-1},x_{i+1},\ldots,x_d)$ or of the form $t = g(x)$. Moreover for the graphs over $x$ we can assume that $|\nabla g|<1/S$, where $S$ is the maximal speed for curves $\gamma_x$. Therefore, $\omega_h$-a.e $\gamma$ can have at most one intersection with such a graph. For the rest of the graphs, we apply Lemma~\ref{l:graphfinite} to each graph, and we conclude that $N_\Sigma(\gamma)$ is finite for $\omega_h-$ a.e. $\gamma \in \Gamma$.
\end{proof}

We will now prove Lemma~\ref{l:graphfinite}. The strategy is to rule out non-transverse intersections, but this makes sense only at points of the hypersurface which admit a tangent space. Hence we need a preliminary result allowing us to leave out the non-differentiable points:

\begin{lemma} \label{l:nullinter}
Let $E \subset [0,T] \times \R^d$ be such that $\mathcal{H}^d (E) =0.$ Then
\[
\omega_h(\{ \gamma \in \Gamma : \exists t \in [0,T] \text{ such that } (t,\gamma_x(t)) \in E \}) = 0.
\]
\end{lemma}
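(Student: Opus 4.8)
The plan is to exploit a dimension-counting mechanism: the exceptional set $E$ has zero $\mathcal H^d$-measure inside the $(d+1)$-dimensional space-time $[0,T]\times\R^d$, whereas at each fixed time the pushforward $(e_t)\sharp\omega_h$ is dominated by the $(d+1)$-dimensional Lebesgue measure by \eqref{eq:etomega}. The $S$-Lipschitz bound on $\gamma_x$ provides the crucial link between these two scales: a curve meeting a small space-time ball at some instant must, at the \emph{central time} of that ball, occupy a comparable spatial ball. Covering $E$ by balls of small total $d$-content then controls the $\omega_h$-measure of the offending curves, and letting the content tend to $0$ finishes the argument.

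First I would restrict attention to the full-measure set $\Gamma_S=\lbrace\gamma\in\Gamma\colon\gamma_x\text{ is }S\text{-Lipschitz}\rbrace$, whose complement is $\omega_h$-null. Fix $\varepsilon>0$. Since $\mathcal H^d(E)=0$, I choose a countable cover of $E$ by balls $B_{r_i}(s_i,y_i)$ with centers $(s_i,y_i)\in E\subset[0,T]\times\R^d$ and $\sum_i r_i^d<\varepsilon$; after an arbitrarily small perturbation of the centers I may assume each $s_i\in[0,T)$, so that the evaluation map $e_{s_i}$ and property \eqref{eq:etomega} are available at the time slice $s_i$.

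The single-ball estimate is the heart of the matter. Suppose $\gamma\in\Gamma_S$ satisfies $(t,\gamma_x(t))\in B_{r_i}(s_i,y_i)$ for some $t\in[0,T]$. Then $|t-s_i|<r_i$, and the $S$-Lipschitz bound gives $|\gamma_x(s_i)-\gamma_x(t)|\le S|s_i-t|<Sr_i$, so that $|\gamma_x(s_i)-y_i|<(1+S)r_i=:\rho_i$. Hence the set of curves hitting this ball is contained in $\lbrace\gamma\colon\gamma_x(s_i)\in B_{\rho_i}(y_i)\rbrace$, whose $\omega_h$-measure equals $(e_{s_i})\sharp\omega_h(B_{\rho_i}(y_i)\times[0,1])$. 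By \eqref{eq:etomega} this pushforward is dominated by $\mathcal L^{d+1}$ on $\R^d\times[0,1]$, so the measure is at most $\mathcal L^d(B_{\rho_i}(y_i))=c_d(1+S)^d r_i^d$, where $c_d=\mathcal L^d(B_1)$.

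Summing over $i$ and using subadditivity of the outer measure, the $\omega_h$-measure of $\lbrace\gamma\in\Gamma_S\colon\exists t,\ (t,\gamma_x(t))\in E\rbrace$ is bounded by $c_d(1+S)^d\sum_i r_i^d<c_d(1+S)^d\varepsilon$, and letting $\varepsilon\to0$ yields the claim; the sets $\lbrace\gamma\colon\gamma_x(s_i)\in B_{\rho_i}(y_i)\rbrace$ are genuinely measurable, being preimages of Borel sets under the measurable map $\gamma\mapsto\gamma_x(s_i)$, so the vanishing outer-measure bound forces the hitting set to be $\omega_h$-null. I expect the only delicate points to be bookkeeping rather than analytic: ensuring the central times lie in $[0,T)$ so that \eqref{eq:etomega} applies, and phrasing the estimate through the outer measure in order to sidestep the a priori measurability of the full hitting set. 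The underlying mechanism is simply that codimension-one negligibility in space-time survives slicing, because the time-slices of $\omega_h$ are absolutely continuous with bounded density.
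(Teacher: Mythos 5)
Your proof is correct and follows essentially the same argument as the paper: cover $E$ by balls with small total $d$-content, use the $S$-Lipschitz bound to transfer a hit anywhere in a space-time ball to a spatial ball at the central time, and apply the pushforward property \eqref{eq:etomega} to bound the measure of curves through each ball by $Cr_i^d$. The extra care you take (perturbing centers into $[0,T)$, phrasing the bound via outer measure for measurability) only tightens bookkeeping the paper leaves implicit.
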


\begin{proof}[Proof of Lemma~\ref{l:nullinter}]
Since $\mathcal{H}^d(E) = 0,$ for any $\e> 0$ there is a sequence of balls $B((t_i,x_i),r_i) $ such that
\[
E \subset \cup_{i=1}^\infty B((t_i,x_i),r_i),
\qquad 
\sum_{i=1}^\infty r_i^d < \e.
\]
For $\omega_h$-a.e. $\gamma$, since $\gamma_x$ is $S$-Lipschitz we have the implication
\begin{align*}
\Big(\exists t\in [0,T],\; (t,\gamma_x(t))\in B((t_i,x_i),r_i) \Big) \quad\Longrightarrow \quad \gamma_x(t_i)\in B(x_i,(1+S)r_i),
\end{align*}
and using the property \eqref{eq:etomega} that $(e_t)\sharp\omega_h =\mathbf 1_{v<u(t,x)}\, dx dv$ we deduce
\begin{align*}
&\omega_h \left(\left\lbrace \gamma\colon\exists t\in [0,T],\,(t,\gamma_x(t))\in B((t_i,x_i),r_i)) \right\rbrace \right)\\
&\leq ( e_{t_i})\sharp \omega_h \left( B(x_i,(1+S)r_i)\times [0,1]\right) \leq C r_i^d, 
\end{align*}
for some constant $C>0$. Summing over $i$ this implies
\[
\omega_h (\{ \gamma \colon \exists t \in [0,T] ,\; (t,\gamma_x(t)) \in E \}) \leq C \sum_{i=1}^\infty r_i^d < C \e,
\]
and letting $\e\to 0$ concludes the proof of Lemma~\ref{l:nullinter}.
\end{proof}

With Lemma~\ref{l:nullinter} at hand we now prove Lemma~\ref{l:graphfinite}.

\begin{proof}[Proof of Lemma \ref{l:graphfinite}]
Let $R,\varepsilon>0,$ and $\delta > 0.$ Since $\Sigma$ is Lipschitz, it has a tangent plane at almost every point. We let $\bar{\Sigma}$ be the set of points on $\Sigma$ that have a tangent plane. Then by Lemma~\ref{l:nullinter}, we see that
\[
\omega_h(\{ \gamma \in \Gamma : \exists t \in [0,T], \mbox{ s.t. } (t,\gamma_x(t)) \in \Sigma \setminus \bar{\Sigma} \}) = 0,
\]
so we only need to show that $N_{\bar\Sigma}(\gamma)<\infty$ for $\omega_h$-a.e. $\gamma$. We obtain this by proving that for $\omega_h$-a.e. $\gamma$, all intersections with $\bar\Sigma$ must be transverse, i.e.
\begin{align}\label{eq:transverse}
&\omega_h\left(X^-\right)=\omega_h\left(X^+\right)=0,\quad\text{where }\\
&X^-=\left\lbrace
\gamma\colon \exists t_0\in (0,T],\; (t_0,\gamma_x(t_0))\in \bar\Sigma,\;(1, \gamma_x'(t_0^-))\in T_{(t_0,\gamma_x(t_0))}\Sigma \nonumber
\right\rbrace, \\
&X^+=\left\lbrace
\gamma\colon \exists t_0\in [0,T),\; (t_0,\gamma_x(t_0))\in \bar\Sigma,\;(1, \gamma_x'(t_0^+))\in T_{(t_0,\gamma_x(t_0))}\Sigma \nonumber
\right\rbrace.
\end{align}
Granted \eqref{eq:transverse} we directly deduce that for $\omega_h$-a.e. $\gamma$ the set of intersection times with $\bar\Sigma$ is discrete, and therefore finite. 

\medskip

It remains to prove \eqref{eq:transverse}. We will prove only $\omega_h\left(X^-\right)=0$, the argument for $X^+$ being analogous.First, we remark that the set $X^-$ satisfies
\begin{align}\label{eq:XRedelta}
&X^-\subset \bigcup_{R>0}\bigcap_{\e>0}\bigcup_{\delta>0} X_{R,\e}^\delta,\quad\text{where }\\
&
X_{R,\varepsilon}^\delta =\Big\lbrace \gamma\in \Gamma\colon  |\gamma_x(0)|\leq R\text{ and } \exists t_0\in [0,T],\; (t_0,\gamma_x(t_0))\in \bar{\Sigma},\nonumber\\
&\hspace{7em} \mathrm{dist}((t,\gamma_x(t)), \Sigma \cap (\{t\} \times B_{R+ST})) \leq \e (t_0 - t)\;\forall t\in (t_0-\delta,t_0)\Big\rbrace.\nonumber
\end{align}

This follows directly from the definitions of $\gamma_x'$ and $T\Sigma$.
Let indeed $\gamma\in X^-$. There obviously is an $R>0$ such that $|\gamma_x(0)|<R$, and then $\gamma_x(t)\in B_{R+ST}$ for all $t$ since $|\gamma_x'|\leq S$. 
Suppose that $\gamma$ intersects $\bar{\Sigma}$ tangentially at time $t_0$, that is, $\gamma_x(t_0)=x_0=(\hat x_0,h(t_0,\hat x_0))$, where $\hat x_0\in\R^{d-1}$ denotes the first $(d-1)$ components of $x_0$, and
\begin{align*}
\gamma_x'(t_0^-)=(\hat y_0,\nabla h(t_0,\hat x_0)\cdot(1,\hat y_0))\quad\text{for some }\hat y_0\in\R^{d-1}.
\end{align*}
Let $\e>0$. By definition of $\gamma_x'(t_0^-)$ and $\nabla h(t_0,x_0)$, there exists $\delta>0$ such that, for all $s\in (-\delta,0]$,
 \begin{align*}
  &|\gamma_x(t_0+s)-x_0-(s\hat y_0,\nabla h(t_0,\hat x_0)\cdot(s,s\hat y_0))|\leq\frac \e 2 |s|,\\
 &|h(t_0+s,\hat x_0 + s\hat y_0)-h(t_0,\hat x_0)-\nabla h(t_0,\hat x_0)\cdot (s,s\hat y_0)|
 \leq
 \frac \e 2 |s|,
 \end{align*}
 which implies
 \begin{align*}
 |(t_0+s,\gamma_x(t_0+s)) - (t_0+s,\hat x_0 +s \hat y_0,h(\hat x_0) + h(t_0+s,\hat x_0+s\hat y_0)|\leq \e |s|,
 \end{align*}
 and therefore 
 $\mathrm{dist}((t,\gamma_x(t)), \Sigma \cap (\{t\} \times B_{R'})) \leq \e (t_0 - t)$ for all $t\in (t_0-\delta,t_0]$, proving \eqref{eq:XRedelta}.

\medskip 

Next we claim that the sets $X_{R,\e}^\delta$ defined in \eqref{eq:XRedelta} satisfy
\begin{align}\label{eq:estimXRedelta}
\limsup_{\delta\to 0}\omega_h(X_{R,\varepsilon}^\delta) \leq C\e,
\end{align}
for some constant $C>0$ depending only on $S$, $R$, $T$ and $\Sigma$. Since the union $\bigcup_\delta X_{R,\e}^\delta$ is nondecreasing, this implies via \eqref{eq:XRedelta} that $\omega_h(X^-)=0$ and concludes the proof of Lemma~\ref{l:graphfinite}.

Recall the definition  
\begin{align*}
X_{R,\varepsilon}^\delta =\Big\lbrace \gamma\in \Gamma\colon & |\gamma_x(0)|\leq R\text{ and } \exists t_0\in [0,T],\; (t_0,\gamma_x(t_0))\in \bar{\Sigma},\\
& \mathrm{dist}((t,\gamma_x(t)), \Sigma \cap (\{t\} \times B_{R+ST})) \leq \e (t_0 - t)\;\forall t\in (t_0-\delta,t_0)\Big\rbrace.
\end{align*}
and let $N:=\lfloor T/\delta\rfloor$. For any $\gamma\in X_{R,\e}^\delta$, there exists $k\in \lbrace 1,\ldots, N\rbrace$ such that
\[
\mathrm{dist}( (k/N,\gamma_x(k/N)) , \Sigma \cap (\{ k/N \} \times B_{R+ST}) ) < \e \delta,
\]
and therefore
\[
X_{R,\e}^\delta \subseteq \bigcup_{k=1}^N \left\{ \gamma \in \Gamma: \left( \frac{k}{N} , \gamma_x \left( \frac{k}{N} \right) \right)  \in \Sigma \cap (\{ k/N \} \times B_{R+ST}) + B(0,\e \delta)\right\}.
\]
Using the Lagrangian property \eqref{eq:etomega}, we know that $e_{k/N}\sharp \omega_h \leq \mathcal L^{d+1},$ and thus
\begin{align*}
\omega_h(X_{R,\varepsilon}^\delta) 
& \leq \sum_{k=1}^N \mathcal{L}^{d+1} \left(\left(\Sigma \cap \left( \left\{ \frac{k}{N} \right\} \times B_{R+ST} \right) + B(0,\e \delta)\right) \times [0,1] \right) \\
& = \sum_{k=1}^N \mathcal{L}^{d} \left(\Sigma \cap \left( \left\{ \frac{k}{N} \right\} \times B_{R+ST} \right) + B(0,\e \delta) \right). 
\end{align*}
Letting $L$ denote the Lipschitz constant of $h$, we have
\begin{align*}
&\Sigma \cap \left( \left\{ \frac{k}{N} \right\} \times B_{R+ST} \right) + B(0,\e \delta) \\
&\subset \left\lbrace x=(\hat x, x_d)\in B_{R+ST}\colon |x_d-h(k/N,\hat x)|\leq (L+1)\e\delta\right\rbrace,
\end{align*}
and we deduce that
\begin{align*}
\mathcal{L}^{d} \left(\Sigma \cap \left( \left\{ \frac{k}{N} \right\} \times B_{R+ST} \right) + B(0,\e \delta) \right)
\leq c (L+1) \e \delta \,(R+ST)^{d-1},
\end{align*}
for some absolute constant $c>0$.
Therefore we have
\[
\omega_h (X_{R , \e}^ \delta) \leq c(L+1)\e \,(R+ST)^{d-1} \, N\delta
\leq c(L+1)\, (R+ST)^{d-1} T \,\e,
\]
which implies \eqref{eq:estimXRedelta}.
\end{proof}

\section*{Acknoledgements}
X.L. is partially supported by ANR project ANR-18-CE40-0023 and COOPINTER project IEA-297303.
E.M. is supported by the SNF Grant 182565.

\bibliographystyle{acm}

\end{document}